\definecolor{dark blue}{rgb}{0, 0, .6}
\definecolor{grey}{rgb}{.7, .7, .7}
\definecolor{dark green}{rgb}{0, .4, 0}
\newcommand*{\rom}[1]{\expandafter\@slowromancap\romannumeral #1@}
\theoremstyle{definition}
\newtheorem{theorem}{Theorem}[section]
\newtheorem{example}[theorem]{Example}
\newtheorem{conjecture}[theorem]{Conjecture}
\newtheorem{corollary}[theorem]{Corollary}
\newtheorem{lemma}[theorem]{Lemma}
\newtheorem{proposition}[theorem]{Proposition}
\newtheorem{definition}[theorem]{Definition}
\newsavebox{\savepar}
\begin{document}

\title{Perfect Numbers in the Ring of Eisenstein Integers}

\author[Parker]{Zachary Parker}
\author[Rushall]{Jeff Rushall}
\author[Hunt]{Jordan Hunt}
\address{Northern Arizona University, 801 South Osborne Drive, Flagstaff, Arizona 86011}
\email{zdp9@nau.edu, jeffrey.rushall@nau.edu, jdh336@nau.edu}

\begin{abstract}
One of the many number theoretic topics investigated by the ancient Greeks was perfect numbers, which are positive integers equal to the sum of their proper positive integral divisors. Mathematicians from Euclid to Euler investigated these mysterious numbers. We present results on perfect numbers in the ring of Eisenstein integers.

\end{abstract}

\subjclass[2010]{11N80}

\keywords{Perfect number, Eisenstein integer}

\maketitle

%section1
\section{Introduction}\label{sec:intro}

The Pythagoreans, whose motto was \emph{All is Number}, believed that integers possessed mystical and magical properties, which led them to thoroughly investigate the properties of our counting numbers \cite{Burton1991}. For instance, they recognized that 6, 28, 496, and 8128 were the only positive integers less than 10,000 with the property that each was equal to the sum of its proper divisors. This can easily be verified by hand; the proper divisors of $496$ are $1, 2, 4, 8, 16, 31, 62, 124$, and $248$, and of course

\bigskip
\begin{center}
$1 + 2 + 4 + 8 + 16 + 31 + 62 + 124 + 248 = 496$.
\end{center}
\bigskip

Greek mathematicians referred to these special integers as ideal numbers, complete numbers or perfect numbers. Over time, mathematicians chose to use the last of these labels.

\begin{definition}A positive integer $n$ is \textit{perfect} if $n$ is equal to the sum of its proper and positive divisors.
\end{definition}

In \textit{Introduction to Arithmeticae} (circa 100 C.E.), Nicomachus listed the first four perfect numbers, the only ones known to the ancient Greeks. Despite the fact that only a few had been discovered, Euclid, in Book \rom{9}, Proposition $36$ of his classic text \textit{The Elements} (c. 300 B.C.E.) gives sufficient conditions for the existence of even perfect numbers \cite{Burton1991}. More specifically:

\begin{theorem}
Given any integer $k > 1$, if $2^k - 1$ is prime, then $n = 2^{k-1}(2^k - 1)$ is an even perfect number.
\end{theorem}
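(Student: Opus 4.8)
The plan is to work with the sum-of-divisors function $\sigma$, where $\sigma(m)$ denotes the sum of all positive divisors of $m$, including $m$ itself. With this notation, $n$ is perfect precisely when $\sigma(n) = 2n$, since the sum of the proper divisors of $n$ equals $\sigma(n) - n$, and perfection demands that this sum equal $n$. So the entire goal reduces to verifying the single identity $\sigma(n) = 2n$ for $n = 2^{k-1}(2^k - 1)$.

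First I would record the two structural facts that drive the computation. Because $2^k - 1$ is odd, it shares no factor of $2$ with a power of two, so $\gcd(2^{k-1},\, 2^k - 1) = 1$. This coprimality is what lets me exploit that $\sigma$ is multiplicative on relatively prime arguments, yielding $\sigma(n) = \sigma(2^{k-1}) \cdot \sigma(2^k - 1)$. Next, the divisors of the prime power $2^{k-1}$ are exactly $1, 2, 4, \ldots, 2^{k-1}$, whose geometric sum is $\sigma(2^{k-1}) = 2^k - 1$; and since $2^k - 1$ is prime by hypothesis, its only positive divisors are $1$ and itself, so $\sigma(2^k - 1) = 1 + (2^k - 1) = 2^k$.

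Combining these gives
\[
\sigma(n) = (2^k - 1)\cdot 2^k = 2 \cdot 2^{k-1}(2^k - 1) = 2n,
\]
which is exactly the perfection criterion. Finally, since the hypothesis $k > 1$ forces $k - 1 \geq 1$, the factor $2^{k-1}$ is even, and hence so is $n$, completing the argument. If one prefers to avoid citing the multiplicativity of $\sigma$ as a black box, the identical conclusion follows by enumerating the divisors of $n$ outright — they are the numbers $2^i$ and $2^i(2^k - 1)$ for $0 \le i \le k-1$ — and summing all of these except $n$ itself via two geometric series.

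I expect the only genuine subtlety to lie in justifying that the divisor list above is complete, which is precisely the content hidden inside the multiplicative step: it rests on unique factorization together with the coprimality of $2^{k-1}$ and $2^k - 1$. Once the divisor structure is pinned down, every remaining step is a routine summation of a geometric progression, so the main obstacle is conceptual bookkeeping rather than any delicate estimate.
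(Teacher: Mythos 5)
Your proof is correct and matches the approach the paper itself relies on: the paper cites this result of Euclid without writing out a proof, but the machinery it develops and alludes to later (the multiplicativity of $\sigma$, the formula $\sigma(p^n) = \frac{p^{n+1}-1}{p-1}$, and the observation that $\sigma(2^k-1) = 2^k$ when $2^k-1$ is prime) is exactly the argument you give. Nothing is missing; your coprimality justification and the fallback direct enumeration of divisors are both sound.
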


Leonhard Euler (C.E. 1707--1783) later proved that all even perfect numbers are of Euclid's specified form.  It is worth noting that 33,550,336, the fifth smallest perfect number, remained undiscovered until the 15\textsuperscript{th} century, where it was revealed in an anonymous manuscript. To date, only 49 even perfect numbers have been found.

Integers of the form $2^k - 1$, where $k$ is a positive integer, are known as \emph{Mersenne numbers}, their name derived from Marin Mersenne (C.E. 1588--1648), a monk who had achieved some renown as a number theorist. His investigation of these types of integers was partially inspired by the search for large prime numbers.

\begin{definition} Primes of the form $2^k - 1$, where $k$ is a prime number, are known as \emph{Mersenne primes}. 
\end{definition}

It is important to note that the primality of the exponent in $2^k - 1$ is a necessary but not sufficient condition to guarantee the primality of the corresponding Mersenne number. For instance, the Mersenne number $2^{11} - 1 = 2047$ has a prime exponent, but $2047 = 23 \cdot 89$ is not prime.

As seen in the theorem above and in Euler's subsequent result, Mersenne primes are central to the study of even perfect numbers; both objects will be generalized later in this paper.

In addition to the contributions Euler made concerning even perfect numbers, he proved the following result, which describes the possible structure of an odd perfect number.

\begin{theorem}
If $n$ is an odd perfect number, then $n = p^k \cdot q^2$, where $p\equiv 1 \pmod{4}$ is an odd prime, $k\equiv 1 \pmod{4}$, and the greatest common divisor of $p$ and $q$ is 1.
\end{theorem}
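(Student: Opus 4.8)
The plan is to work with the sum-of-divisors function $\sigma$, where $\sigma(n)$ denotes the sum of all positive divisors of $n$ (including $n$ itself), so that $n$ being perfect is equivalent to the single equation $\sigma(n) = 2n$. Writing the prime factorization of the odd number $n$ as $n = \prod_i p_i^{a_i}$, I would invoke the multiplicativity of $\sigma$ to obtain $\sigma(n) = \prod_i \sigma(p_i^{a_i})$, where each factor is the sum $\sigma(p_i^{a_i}) = 1 + p_i + \cdots + p_i^{a_i}$.

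The crux of the argument is a $2$-adic valuation count. Since $n$ is odd, $2n \equiv 2 \pmod{4}$, so $\sigma(n)$ is divisible by exactly one factor of $2$. On the other hand, because each $p_i$ is odd, the sum $\sigma(p_i^{a_i})$ consists of $a_i + 1$ odd terms and is therefore even precisely when $a_i$ is odd. Comparing these two observations forces exactly one exponent---call its prime $p$ and its exponent $k$---to be odd, with every other exponent even, and moreover forces $\sigma(p^k) \equiv 2 \pmod{4}$. Collecting the primes carrying even exponents into a single factor then shows that $n = p^k q^2$ with $k$ odd and $\gcd(p,q) = 1$, since a product of prime powers with even exponents is a perfect square.

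It remains to sharpen ``$k$ odd'' to the two congruences $p \equiv 1 \pmod{4}$ and $k \equiv 1 \pmod{4}$, and this is where the main work lies. The idea is to evaluate $\sigma(p^k) = 1 + p + \cdots + p^k$ modulo $4$ in the two cases $p \equiv 1$ and $p \equiv 3 \pmod{4}$. When $p \equiv 1 \pmod{4}$ every term is $\equiv 1$, so $\sigma(p^k) \equiv k + 1 \pmod{4}$, and the requirement $\sigma(p^k) \equiv 2 \pmod{4}$ becomes exactly $k \equiv 1 \pmod{4}$. The delicate case is $p \equiv 3 \pmod{4}$: here the powers of $p$ alternate between $1$ and $3$ modulo $4$, and since $k$ is odd the $k+1$ terms pair off to give $\sigma(p^k) \equiv 0 \pmod{4}$. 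This contradicts $\sigma(p^k) \equiv 2 \pmod{4}$, ruling out $p \equiv 3 \pmod{4}$ and leaving only $p \equiv 1 \pmod{4}$ together with $k \equiv 1 \pmod{4}$. The main obstacle is organizing this modular bookkeeping cleanly---particularly the alternating-sum cancellation that eliminates the $p \equiv 3$ case---rather than any deep structural difficulty.
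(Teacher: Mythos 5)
Your proof is correct. Note that the paper does not actually prove this statement: it is quoted in the introduction as a classical theorem of Euler, stated as background without argument, so there is no in-paper proof to compare against. What you have written is essentially Euler's own argument, and all the steps check out. The $2$-adic count is right: $n$ odd gives $\sigma(n) = 2n \equiv 2 \pmod{4}$, each $\sigma(p_i^{a_i})$ is a sum of $a_i + 1$ odd terms and hence is even exactly when $a_i$ is odd, and since $\sigma(n)$ carries exactly one factor of $2$, exactly one exponent is odd and the corresponding factor satisfies $\sigma(p^k) \equiv 2 \pmod{4}$; the remaining even-exponent primes assemble into the square $q^2$ with $\gcd(p,q)=1$. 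The mod-$4$ bookkeeping in the final step is also correct: for $p \equiv 1 \pmod 4$ one gets $\sigma(p^k) \equiv k+1 \pmod 4$, forcing $k \equiv 1 \pmod 4$, while for $p \equiv 3 \pmod 4$ the $k+1$ terms (an even count, since $k$ is odd) pair into sums $1 + 3 \equiv 0 \pmod 4$, giving $\sigma(p^k) \equiv 0 \pmod 4$ and the desired contradiction. One small presentational point: you could note explicitly that the odd factors $\sigma(p_i^{a_i})$ contribute nothing to the power of $2$, which is the one-line justification for why the unique even factor must be $\equiv 2$ and not merely even; otherwise the write-up is complete as planned.
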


It is unknown if any odd perfect numbers exist; exhaustive computer searches have determined that any odd perfect numbers must exceed $10^{300}$.

The rest of this paper is structured as follows: In Section \ref{sec:back}, we give the appropriate background information from algebraic number theory needed to define the ring of Eisenstein integers, which will serve as the context in which we generalize perfect numbers and Mersenne primes. In Section \ref{sec:mersenne}, we will motivate the existence of Mersenne primes from a slightly different perspective, which will then allow us to rigorously define Eisenstein Mersenne primes. In Section \ref{sec:perfect}, we will define an Eisenstein perfect number and present conditions for the existence of even perfect numbers in this generalized setting. Finally, in Section \ref{sec:work}, we offer concluding remarks and suggest some open questions that require further investigation.

\section{Background}\label{sec:back}

As part of his Ph.D. thesis, the great Karl Friedrich Gauss (C.E. 1777--1855) proved a version of what is now referred to as the \emph{Fundamental Theorem of Algebra}. As a consequence of this result, given any positive integer $n$, a polynomial of the form $x^n - 1$ has exactly $n$ solutions in the complex plane. These solutions are known as \emph{complex} $n\textsuperscript{th}$ \emph{roots of unity}, and it is well-known that they are of the form $e^{2 \pi i k / n}$ for each integer value of $k$, $1 \leq k \leq n$. We may then write

\bigskip
\begin{center}
$x^n - 1 = \prod \limits_{k=1}^n \left( x - e^{2 \pi i k / n} \right)$.
\end{center}
\bigskip

When we restrict this sum to those values of $k$ that are relatively prime to $n$ (i.e., $k$ and $n$ share no common factors), we obtain what is known as the $n\textsuperscript{th}$ \emph{cyclotomic polynomial}, denoted $\Phi_n(x)$. More precisely,

\bigskip
\begin{center}
$\Phi_n(x) = \prod \limits_{(k,n)=1} \left( x - e^{2 \pi i k / n} \right)$.
\end{center}
\bigskip

Cyclotomic polynomials arise in the study of various problems in field theory and algebraic number theory. We are interested exclusively in the third cyclotomic polynomial, namely $\Phi_3(x) = 1 + x + x^2$. The specific root of $\Phi_3(x)$ that will serve as a focus of this paper is this cubic complex root of unity, henceforth denoted $\omega$:

\bigskip
\begin{center}
$e^{2 \pi i /3} = \frac{-1 + \sqrt{-3}}{2}$
\end{center}
\bigskip

This leads to the following proposition.

\begin{proposition}
The set $\mathbb{Z}[\omega] = \{a + b \omega: a,b \in \mathbb{Z} \}$, under the usual operations of addition and multiplication of complex numbers, forms an integral domain, known as the ring of \emph{Eisenstein integers}.
\end{proposition}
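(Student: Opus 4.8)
The plan is to establish that $\mathbb{Z}[\omega]$ is a commutative ring with unity having no zero divisors, and the most economical route is to realize it as a subring of the field $\mathbb{C}$. Because $\mathbb{C}$ is a field, associativity and commutativity of addition and multiplication, together with the distributive laws, are automatically inherited by any subset of $\mathbb{C}$ that is closed under the two operations; likewise, the absence of zero divisors in $\mathbb{C}$ descends to any subring. Thus the real content reduces to confirming the subring criterion: that $\mathbb{Z}[\omega]$ is nonempty and contains $1$, that it is closed under subtraction, and that it is closed under multiplication.

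Before carrying out those checks, I would record the defining relation satisfied by $\omega$. Since $\omega = e^{2\pi i/3}$ is a root of $\Phi_3(x) = 1 + x + x^2$, we have $\omega^2 + \omega + 1 = 0$, equivalently $\omega^2 = -1 - \omega$, and this single identity is the engine behind every subsequent calculation. Nonemptiness is immediate, as $0 = 0 + 0\omega$ and $1 = 1 + 0\omega$ both lie in $\mathbb{Z}[\omega]$. Closure under subtraction is routine bookkeeping: $(a + b\omega) - (c + d\omega) = (a - c) + (b - d)\omega$, whose coefficients are again integers.

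The one genuine computation is closure under multiplication. Expanding $(a + b\omega)(c + d\omega) = ac + (ad + bc)\omega + bd\,\omega^2$ and substituting $\omega^2 = -1 - \omega$ collapses the result into the form $(\text{integer}) + (\text{integer})\omega$, so the product again belongs to $\mathbb{Z}[\omega]$. Once closure and the presence of the identity are verified, $\mathbb{Z}[\omega]$ is a subring of $\mathbb{C}$, and commutativity, the remaining ring axioms, and the no-zero-divisor property all follow from the corresponding properties of $\mathbb{C}$; hence $\mathbb{Z}[\omega]$ is an integral domain. The only step demanding any care is the multiplicative closure, where one must correctly apply $\omega^2 = -1 - \omega$ to clear the quadratic term; everything else is formal.
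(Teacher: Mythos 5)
Your proof is correct. Strictly speaking, the paper does not prove this proposition at all: it remarks that the verification is routine and defers to standard texts (Dummit--Foote, Lang), then singles out the one nontrivial check --- closure under multiplication --- as a separate proposition, proved by exactly your computation: expand $(a+b\omega)(c+d\omega)$ and apply $\omega^2 = -1-\omega$ to obtain $(ac-bd) + (ad+bc-bd)\omega$. Your surrounding scaffolding --- realizing $\mathbb{Z}[\omega]$ as a subring of $\mathbb{C}$ via the subring criterion (nonempty, contains $1$, closed under subtraction and multiplication), so that associativity, commutativity, distributivity, and the absence of zero divisors are all inherited from the field $\mathbb{C}$ --- is precisely the standard textbook argument the paper gestures at. So your proposal is essentially the paper's intended proof with the cited material written out, and it correctly isolates multiplicative closure as the only step with genuine computational content.
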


It is relatively easy to show that the Eisenstein integers form an integral domain (i.e., a commutative ring with unity having no zero divisors; a proof can be found in many abstract algebra textbooks, such as \cite{Dummit2004} and \cite{Lang2002}). For the remainder of this paper, the term \emph{ring} will always refer to an integral domain. To distinguish between elements of $\mathbb{Z}$ and $\mathbb{Z}[\omega]$, we will often refer to elements (or primes) in $\mathbb{Z}$ as \emph{rational integers} (or \emph{rational primes}).

It will be useful for us to verify the closure axiom under multiplication in $\mathbb{Z}[\omega]$, since doing so will help the reader begin to understand arithmetic in this context. Remember that our complex root of unity $\omega$ satisfies $1 + \omega + \omega ^2 = 0$, and hence $\omega ^2 = -1 - \omega$.

\begin{proposition}
The ring of Eisenstein integers is closed under multiplication.
\end{proposition}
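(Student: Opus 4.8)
The plan is to take two arbitrary elements of $\mathbb{Z}[\omega]$, multiply them out, and use the relation $\omega^2 = -1 - \omega$ to rewrite the product in the canonical form $a + b\omega$ with $a, b \in \mathbb{Z}$. Concretely, I would fix $\alpha = a + b\omega$ and $\beta = c + d\omega$ with $a, b, c, d \in \mathbb{Z}$, and compute
\begin{equation*}
\alpha \beta = (a + b\omega)(c + d\omega) = ac + (ad + bc)\omega + bd\,\omega^2.
\end{equation*}
The only term that escapes the desired form is the $bd\,\omega^2$ term, so the crux is to eliminate $\omega^2$.

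To do this, I would invoke the defining relation $\omega^2 = -1 - \omega$ (stated in the excerpt as a consequence of $1 + \omega + \omega^2 = 0$) and substitute, obtaining
\begin{equation*}
\alpha \beta = ac + (ad + bc)\omega + bd(-1 - \omega) = (ac - bd) + (ad + bc - bd)\omega.
\end{equation*}
Then I would observe that since $\mathbb{Z}$ is closed under addition, subtraction, and multiplication, both coefficients $ac - bd$ and $ad + bc - bd$ are rational integers. Hence $\alpha\beta$ is again of the form $a' + b'\omega$ with $a', b' \in \mathbb{Z}$, which is exactly membership in $\mathbb{Z}[\omega]$.

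There is no genuine obstacle here: the argument is a direct computation whose single nontrivial ingredient is the substitution for $\omega^2$. The one point worth stating carefully is \emph{why} the resulting coefficients are integers, namely the closure of $\mathbb{Z}$ itself under its own ring operations; everything else is routine distribution. I would therefore keep the write-up brief, presenting the two displayed computations above and closing with the remark that integrality of the coefficients follows immediately from the corresponding closure properties of $\mathbb{Z}$.
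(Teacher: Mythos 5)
Your proposal is correct and follows exactly the paper's own argument: expand the product, substitute $\omega^2 = -1 - \omega$ to obtain $(ac - bd) + (ad + bc - bd)\omega$, and conclude from the closure of $\mathbb{Z}$ under its ring operations. Nothing is missing, and there is no meaningful difference in approach.
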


\begin{proof} Given any $a + b \omega, c + d\omega \in \mathbb{Z}[\omega]$, their product is:

\bigskip
\begin{align*}
(a + b \omega) \cdot (c + d \omega) & = ac + (ad) \cdot \omega + (bc) \cdot \omega + (bd) \cdot \omega ^2 \\
& = ac + (ad + bc) \cdot \omega + bd  \cdot (-1 - \omega) \\
& = ac + (ad + bc) \cdot \omega - bd - (bd) \cdot \omega \\
& = (ac - bd) + (ad + bc - bd) \cdot \omega. 
\end{align*}
\bigskip

Since $\mathbb{Z}$ is closed under both addition and multiplication, it then follows that $\mathbb{Z}[\omega]$ is closed under multiplication.
\end{proof}

We will now present several ring-based definitions that are necessary stepping stones toward understanding objects in our particular ring, $\mathbb{Z}[\omega]$. Each are familiar versions of objects and processes in $\mathbb{Z}$.

\begin{definition}
Let $\alpha$ and $\beta$ be elements in a ring $R$. We say that $\alpha$ \emph{divides} $\beta$, written $\alpha | \beta$, if there exists some $\gamma \in R$ such that $\alpha \cdot \gamma = \beta$.
\end{definition}

\begin{definition}
An element $\alpha$ in a ring $R$ is a \emph{unit} if it has a multiplicative inverse -- that is, if there exists some $\beta \in R$ such that $\alpha \cdot \beta = 1$.
\end{definition}

\begin{definition}
An element $\alpha$ of a ring $R$ is \emph{prime} if, whenever $\alpha | \beta\gamma$ for some $\beta, \gamma \in R$, then either $\alpha | \beta$ or $\alpha | \gamma$.
\end{definition}

A visual depiction of $\mathbb{Z}[\omega]$ appears in Figure~\ref{fig:skele}, which shows a selection of Eisenstein integers: The units are pink, the origin dark blue, and random integers light blue. 

\begin{center}
\begin{figure}[h]
\begin{tikzpicture}[scale=2]
\draw [<->] (-3.7,0) --(3.7,0);
\draw [<->] (0,-2.3) --(0,2.3);
[every node/.style=draw, every label/.style=draw]

\draw [ultra thin] (1,0) -- ++(0:2) -- ++(60:1) -- ++(120:1) -- ++(180:6) -- ++(-120:1) -- ++(-60:1) -- ++(-120:1) -- ++(-60:1) -- ++(0:6) -- ++(60:1) -- ++(120:3) -- ++(-120:4) -- ++(120:4) -- ++(-120:2) -- ++(-60:2) -- ++(60:4) -- ++(-60:4) -- ++(60:3) -- ++(180:6) -- ++(-60:3) -- ++(60:4) -- ++(-60:3) -- ++(180:5) -- ++(60:3) -- ++(-60:4) -- ++(0:2) -- ++(60:1) -- ++(180:7) -- ++(-60:1) -- ++(60:2) -- ++(120:2) -- ++(-120:1) -- ++(0:1) -- ++(-60:3) -- ++(0:2) -- ++(60:4) -- ++(-60:1) -- ++(-120:2) -- ++(-60:1);
\draw [ultra thin] (-3,0) -- (1,0);
\filldraw (1,0)  ++(0:1) circle(1pt) ++(120:1) circle(1pt) ++(0:1) circle(1pt) ++(-120:1) circle(1pt) ++(0:1) circle(1pt) ++(120:2) circle(1pt) ++(-120:1) circle(1pt) ++(-120:1) circle(1pt) ++(-120:1) circle(1pt) ++(0:1) circle(1pt) ++(120:1) circle(1pt) ++(0:1) circle(1pt) ++ (-120:1) circle(1pt) ++(0:1) circle(1pt) ++(120:1) circle(1pt) ++(0:1) circle(1pt) ++(-120:1) circle(1pt) ++(0:1) circle(1pt) ++(120:1) circle(1pt) ++(60:1) circle(1pt) ++(180:1) circle(1pt) ++(60:1) circle(1pt) ++(180:1) circle(1pt) ++(180:1) circle(1pt) ++(-120:1) circle(1pt) ++(0:1) circle(1pt) ++(120:1) circle(1pt) ++(180:1) circle(1pt) ++(-60:1) circle(1pt) ++(-60:1) circle(1pt) ++(180:1) circle(1pt) ++(60:1) circle(1pt) ++(180:1) circle(1pt) ++(60:1) circle(1pt) ++(180:1) circle(1pt) ++(-60:1) circle(1pt) ++(-60:1) circle(1pt) ++(-60:1) circle(1pt) ++(180:1) circle(1pt) ++(60:1) circle(1pt) ++(120:1) circle(1pt) ++(-120:1) circle(1pt) ++(-60:1) circle(1pt) ++(-60:1) circle(1pt) ++(60:1) circle(1pt) ++(-60:1) circle(1pt) ++(60:1) circle(1pt) ++(-60:1) circle(1pt) ++(60:1) circle(1pt) ++(-60:1) circle(1pt) ++(60:1) circle(1pt) ++(-120:1) circle(1pt) ++(180:6) circle(1pt) ++(60:1) circle(1pt) ++(-60:1) circle(1pt) ++(60:1) circle(1pt) ++(-60:1) circle(1pt) ++(60:1) circle(1pt) ++(180:2) circle(1pt) ++(120:1) circle(1pt) ++(0:1) circle(1pt) ++(-120:1) circle(1pt) ++(0:1) circle(1pt) ++(60:1) circle(1pt) ++(180:1) circle(1pt) ++(-60:1) circle(1pt) ++(60:3) circle(1pt) ++(180:3) circle(1pt) ++(-60:1) circle(1pt) ++(60:1) circle(1pt) ++(-60:1) circle(1pt) ++(60:1) circle(1pt) ++(-60:1) circle(1pt) ++(180:3) circle(1pt) ++(-60:1) circle(1pt) ++(60:1) circle(1pt) ++(-60:1) circle(1pt) ++(60:1) circle(1pt) ++(-60:1) circle(1pt) ++(180:2) circle(1pt) ++(-120:1) circle(1pt) ++(0:1) circle(1pt) ++(-120:1) circle(1pt) ++(120:1) circle(1pt) ++(60:3) circle(1pt) ++(180:1) circle(1pt) ++(-120:1) circle(1pt) ++(0:7) circle(1pt) ++(120:1);

\color{magenta}
\node [label={[label distance=1.9cm]58:$1 + \omega$}]{};
\node [label={[label distance=1.3cm]313:$- \omega$}]{};
\node[above right] at (1,0) {$1$};
\node[above right] at (-1,0) {$-1$};
\node [label={[label distance=1.65cm]99:$\omega$}]{};
\node [label={[label distance=.88cm]267:$-1 - \omega$}]{};
\color{black}
\node[above right] at (2,0) {$2$};
\node[above right] at (3,0) {$3$};
\node[above right] at (-2,0) {$-2$};
\node[above right] at (-3,0) {$-3$};
\color{dark blue}
\node[above left] at (0,0) {$O$};
\color{cyan}
\node [label={[label distance=3.88cm]59:$2 + 2 \omega$}]{};
\node [label={[label distance=2.98cm]301:$-2 \omega$}]{};
\node [label={[label distance=4.9cm]347:$2 - \omega$}]{};
\node [label={[label distance=6.7cm]15:$4 + \omega$}]{};
\node [label={[label distance=5.6cm]143:$-2 + 2 \omega$}]{};
\node [label={[label distance=3.9cm]195:$-3 - \omega$}]{};

\end{tikzpicture}
\caption{}
\label{fig:skele}
\end{figure}
\end{center}

An important tool used to study $\mathbb{Z}[\omega]$ is the norm function. Informally, one may think of the norm function as a kind of ``measuring the distance from the origin" device, not unlike the role the absolute value function plays over $\mathbb{R}$.

\begin{definition}
Given any $\alpha = a + b \omega$ in $\mathbb{Z}[\omega]$, the \emph{norm function} $N:\mathbb{Z}[\omega]\rightarrow \mathbb{Z}$ is defined as

\bigskip
\begin{center}
$N(\alpha) = a^2 - ab + b^2$.
\end{center}
\end{definition}
\bigskip

The norm function possesses some useful qualities. First, the function always provides a nonnegative integer output, and in fact takes on nonzero values at all nonzero Eisenstein integers. We leave it to the curious reader to verify this fact. Second, the norm function is completely multiplicative, as shown in the following lemma.

\begin{lemma}
If $\alpha, \beta \in \mathbb{Z}[\omega]$, then $N(\alpha \cdot \beta) = N(\alpha) \cdot N(\beta)$.
\end{lemma}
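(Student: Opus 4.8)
The plan is to reduce the claim to the multiplicativity of the ordinary complex modulus, which sidesteps any lengthy polynomial expansion. The key observation is that the norm is nothing more than the squared modulus of $\alpha$ viewed as a complex number. Writing $\omega = e^{2\pi i/3} = -\tfrac{1}{2} + \tfrac{\sqrt{3}}{2}i$, any $\alpha = a + b\omega$ becomes $\alpha = \left(a - \tfrac{b}{2}\right) + \tfrac{b\sqrt{3}}{2}i$, so that
\begin{align*}
|\alpha|^2 = \left(a - \tfrac{b}{2}\right)^2 + \left(\tfrac{b\sqrt{3}}{2}\right)^2 = a^2 - ab + b^2 = N(\alpha).
\end{align*}
Equivalently, and perhaps more intrinsically, one can observe that $N(\alpha) = \alpha \overline{\alpha}$, where the bar denotes complex conjugation; since $\overline{\omega} = \omega^2 = -1 - \omega \in \mathbb{Z}[\omega]$, the conjugate $\overline{\alpha} = (a - b) - b\omega$ again lies in the ring, and a direct check using the product formula confirms $\alpha\overline{\alpha} = a^2 - ab + b^2$.

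Once either identity is in hand, multiplicativity is immediate. First I would invoke the fact that the complex modulus satisfies $|zw| = |z|\,|w|$ for all complex numbers $z, w$ (equivalently, that complex conjugation is a ring homomorphism, so that $\overline{\alpha\beta} = \overline{\alpha}\,\overline{\beta}$). Then, using the previous proposition to guarantee that $\alpha\beta \in \mathbb{Z}[\omega]$, so that $N(\alpha\beta)$ is meaningful, I would compute
\begin{align*}
N(\alpha\beta) = |\alpha\beta|^2 = |\alpha|^2\,|\beta|^2 = N(\alpha)\,N(\beta),
\end{align*}
completing the argument.

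There is no serious obstacle here; the only thing requiring genuine care is the verification that $N(\alpha) = |\alpha|^2$, which hinges on correctly expanding the imaginary part of $\omega$ and recovering the cross term $-ab$. A reader preferring to stay entirely within the ring, and to avoid appealing to properties of $\mathbb{C}$, could instead substitute the explicit product $\alpha\beta = (ac - bd) + (ad + bc - bd)\omega$ computed in the preceding proposition directly into the definition of $N$, and then expand both $N(\alpha\beta)$ and $N(\alpha)\,N(\beta)$ as polynomials in $a, b, c, d$. The two expressions agree term by term; the main (purely clerical) hazard along that route is bookkeeping errors among the many degree-four monomials, so organizing the expansion by grouping symmetric terms would be prudent. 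I would lead with the modulus argument, since it explains \emph{why} the identity holds rather than merely confirming that it does.
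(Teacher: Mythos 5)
Your proposal is correct, but it takes a genuinely different route from the paper. The paper proves the lemma by brute force: it substitutes the explicit product $(ac - bd) + (ad + bc - bd)\omega$ (from the closure proposition) into the definition of $N$, expands everything as a polynomial in $a, b, c, d$, and regroups to factor out $(c^2 - cd + d^2)$, arriving at $N(\alpha)N(\beta)$ after several lines of degree-four bookkeeping. You instead identify $N(\alpha)$ with the squared complex modulus $|\alpha|^2 = \alpha\overline{\alpha}$ --- and your verification of this identity is correct, both via the coordinates $\alpha = \bigl(a - \tfrac{b}{2}\bigr) + \tfrac{b\sqrt{3}}{2}i$ and via $\overline{\omega} = \omega^2$, $\overline{\alpha} = (a-b) - b\omega$ --- after which multiplicativity falls out of $|zw| = |z|\,|w|$ in one line. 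Since the paper defines $\mathbb{Z}[\omega]$ as a subring of $\mathbb{C}$, your appeal to properties of the complex modulus is entirely legitimate. What each approach buys: the paper's expansion is self-contained and elementary, requiring nothing beyond ring arithmetic, and doubles as a worked example of computation in $\mathbb{Z}[\omega]$ (a stated pedagogical aim of that section); your argument is shorter, far less error-prone, explains conceptually \emph{why} the norm is multiplicative (it is a restriction of a multiplicative function on $\mathbb{C}$), and generalizes immediately to norms on other quadratic rings such as $\mathbb{Z}[i]$. You also correctly flag the paper's route as the in-ring alternative and identify its main hazard (clerical errors among the quartic monomials), so you have effectively subsumed the paper's proof within your discussion.
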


\begin{proof}
Let $\alpha = a + b \omega$ and $\beta = c + d \omega$. Then:

\bigskip
\begin{align*}
N(\alpha \cdot \beta) & = N\left[ (a + b \omega) \cdot (c + d \omega) \right]\\
& = N \left[ (ac - bd) + (ad + bc - bd) \cdot \omega \right]\\
& = \left[ ac - bd \right] ^2 - \left[ ac - bd \right] \left[ ad + bc - bd \right] + \left[ad + bc - bd \right] ^2\\
& = a^2 c^2 -2abcd +b^2 d^2 - \left[ a^2 cd + abc^2 - abcd - abd^2 - b^2 cd + b^2 d^2 \right] \\
& + \left[a^2 d^2 + abcd - abd^2 + abcd + b^2 c^2 - b^2 cd - abd^2 - b^2cd +b^2 d^2 \right] \\
& = a^2c^2 - a^2cd + a^2d^2- abc^2 + abcd - abd^2 + b^2c^2 - b^2cd + b^2d^2 \\
& = a^2(c^2 - cd + d^2) - ab(c^2 - cd + d^2) + b^2(c^2 - cd + d^2) \\
& = (a^2 - ab + b^2)(c^2 - cd + d^2) \\
& = N(\alpha)N(\beta).
\end{align*}
\end{proof}

Using the norm function, it can be shown that $\mathbb{Z}[\omega]$ is a Euclidean ring. A key step in this proof is verifying that the norm function imposes enough structure on $\mathbb{Z}[\omega]$ to allow for something akin to the division algorithm to exist. Specifically, one needs the following result, whose proof can be found in many sources, such as \cite{Cox2013}.

\begin{proposition}
If $\alpha, \beta \in \mathbb{Z}[\omega], \beta \neq 0$, there exist $\gamma, \delta \in \mathbb{Z}[\omega]$ that satisfy $\alpha = \gamma \beta + \delta$, and such that $N(\delta) < N(\beta)$.
\end{proposition}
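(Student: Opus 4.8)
The plan is to mimic the classical proof of the division algorithm in $\mathbb{Z}$ by passing to the field of fractions of $\mathbb{Z}[\omega]$ and then ``rounding'' the quotient to a nearby Eisenstein integer. Since $\mathbb{Z}[\omega]$ is an integral domain, it embeds in its field of fractions $\mathbb{Q}(\omega) = \{r + s\omega : r, s \in \mathbb{Q}\}$, and because $\beta \neq 0$ the quotient $\alpha/\beta$ is a well-defined element there. First I would make this quotient explicit: multiplying numerator and denominator by the complex conjugate $\bar\beta$ (noting that $\bar\omega = \omega^2 = -1 - \omega$, so that $\bar\beta \in \mathbb{Z}[\omega]$ and $\beta\bar\beta = N(\beta) \in \mathbb{Z}$) gives $\alpha/\beta = \alpha\bar\beta / N(\beta)$, whose numerator lies in $\mathbb{Z}[\omega]$. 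Hence I can write $\alpha/\beta = r + s\omega$ with $r, s \in \mathbb{Q}$.

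Next I would choose $m, n \in \mathbb{Z}$ to be nearest integers to $r$ and $s$ respectively, so that $|r - m| \le \tfrac{1}{2}$ and $|s - n| \le \tfrac{1}{2}$. Setting $\gamma = m + n\omega \in \mathbb{Z}[\omega]$ and $\delta = \alpha - \gamma\beta$, the identity $\alpha = \gamma\beta + \delta$ holds by construction, and $\delta \in \mathbb{Z}[\omega]$ since $\mathbb{Z}[\omega]$ is closed under multiplication and subtraction. It remains only to verify the norm inequality $N(\delta) < N(\beta)$.

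For that step I would factor $\delta = \alpha - \gamma\beta = \beta\,(\alpha/\beta - \gamma) = \beta\bigl[(r-m) + (s-n)\omega\bigr]$. The algebraic identity underlying the multiplicativity lemma proved above is a polynomial identity valid for arbitrary, not merely integer, coefficients, so it extends to $\mathbb{Q}(\omega)$ and yields $N(\delta) = N(\beta)\cdot N\bigl[(r-m) + (s-n)\omega\bigr]$. Writing $x = r - m$ and $y = s - n$, the extended norm is $x^2 - xy + y^2$, and with $|x|, |y| \le \tfrac{1}{2}$ I would bound $x^2 - xy + y^2 \le x^2 + |xy| + y^2 \le \tfrac{1}{4} + \tfrac{1}{4} + \tfrac{1}{4} = \tfrac{3}{4}$. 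Therefore $N(\delta) \le \tfrac{3}{4}\, N(\beta) < N(\beta)$, as required.

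I expect the step demanding the most care to be this final bound: one must observe that the cross term $-xy$ can \emph{increase} the quadratic form (precisely when $x$ and $y$ have opposite signs), so the estimate must account for $+|xy|$ rather than $-xy$. This is where the hexagonal geometry of the Eisenstein lattice enters, and it is reassuring that the resulting constant $\tfrac{3}{4}$ still lies strictly below $1$, which is exactly what the Euclidean property requires. A secondary point worth flagging is the justification that the multiplicativity of $N$ persists over $\mathbb{Q}(\omega)$; this follows because the computation in the earlier lemma never used integrality of the coefficients.
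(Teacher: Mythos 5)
Your proposal is correct, and since the paper does not prove this proposition itself --- it defers to the literature (\cite{Cox2013}) --- there is nothing internal to compare against; your argument is precisely the standard one found in such references: pass to $\mathbb{Q}(\omega)$, round the coordinates of $\alpha/\beta$ to nearest integers, and bound the extended form $x^2 - xy + y^2 \le \tfrac{3}{4} < 1$ on the square $|x|, |y| \le \tfrac{1}{2}$. Your two flagged subtleties are exactly the right ones: the cross term must be estimated as $+|xy|$ (the bound $\tfrac{3}{4}$ is in fact attained at $x = -y = \tfrac{1}{2}$, so there is no slack to waste), and multiplicativity of $N$ over $\mathbb{Q}(\omega)$ is legitimate because the identity in the paper's Lemma 2.8 is polynomial in the coefficients and never uses integrality.
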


Since $\mathbb{Z}[\omega]$ is a Euclidean ring, it is also a principal ideal domain and hence a unique factorization domain, a chain of implications that is discussed in most abstract algebra textbooks (see, for instance, \cite{Lang2002}). Naturally, we must now determine those elements of $\mathbb{Z}[\omega]$ that are units; we leave verification of the next result to the curious reader.

\begin{proposition}
An element $\alpha$ of $\mathbb{Z}[\omega]$ is a unit if and only if $N(\alpha) = 1$.
\end{proposition}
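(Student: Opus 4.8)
The plan is to prove the two implications separately, leaning on the multiplicativity of the norm established in the preceding lemma together with the stated fact that $N$ takes nonnegative integer values and vanishes only at $0$.

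For the forward direction, I would suppose $\alpha$ is a unit, so that $\alpha \cdot \beta = 1$ for some $\beta \in \mathbb{Z}[\omega]$. Applying the norm to both sides and invoking multiplicativity gives $N(\alpha) \cdot N(\beta) = N(1) = 1$. Since $N(\alpha)$ and $N(\beta)$ are nonnegative integers whose product is $1$, each must equal $1$; in particular $N(\alpha) = 1$, as desired.

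For the reverse direction, I would suppose $N(\alpha) = 1$, writing $\alpha = a + b\omega$, and then exhibit an explicit inverse. The key step is to introduce the conjugate $\bar{\alpha} = (a - b) - b\omega$, which is obtained from $\alpha$ by replacing $\omega$ with $\omega^2 = -1 - \omega$ (i.e. by complex conjugation) and, crucially, still lies in $\mathbb{Z}[\omega]$. A short computation using the multiplication formula from the closure proposition — specializing to $c = a - b$ and $d = -b$ — shows that the $\omega$-coefficient of $\alpha \cdot \bar{\alpha}$ vanishes and that $\alpha \cdot \bar{\alpha} = a^2 - ab + b^2 = N(\alpha) = 1$. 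Hence $\bar{\alpha}$ is a genuine two-sided multiplicative inverse of $\alpha$ inside the ring, so $\alpha$ is a unit.

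The only subtle point, and the one I would treat as the main obstacle, is recognizing that the candidate inverse $\bar{\alpha}/N(\alpha)$ actually lands back inside $\mathbb{Z}[\omega]$ rather than merely in $\mathbb{C}$. This is precisely what forces the hypothesis $N(\alpha) = 1$: for an arbitrary element the identity $\alpha \cdot \bar{\alpha} = N(\alpha)$ only produces an inverse in $\mathbb{C}$, and it is the integrality together with $N(\alpha)=1$ that keeps the inverse in the ring. Once the identity $\alpha \cdot \bar{\alpha} = N(\alpha)$ is in hand — itself a direct reuse of the earlier multiplication computation — both directions follow with only routine arithmetic.
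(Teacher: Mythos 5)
Your proof is correct, and in fact the paper provides no proof of this proposition at all --- it explicitly leaves the verification ``to the curious reader,'' so there is nothing to compare against. Your argument is exactly the standard one the authors surely intend: the forward direction follows from the multiplicativity lemma together with the fact that $N$ takes nonnegative integer values, and the reverse direction follows from the identity $\alpha \cdot \bar{\alpha} = N(\alpha)$ with $\bar{\alpha} = (a-b) - b\omega \in \mathbb{Z}[\omega]$, which you correctly verify via the paper's own multiplication formula; you also rightly flag the integrality of the inverse as the one point requiring the hypothesis $N(\alpha) = 1$.
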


It then follows that the only units of $\mathbb{Z}[\omega]$ are $\{\pm 1, \pm \omega, \pm \omega ^2\}$. In any given ring, elements that differ only via multiplication by a unit are known as $associates$. Thus, every element in $\mathbb{Z}[\omega]$ has six different associates. Later in this section, we will show how to select the appropriate associate to allow our generalizations in $\mathbb{Z}[\omega]$ of perfect numbers to be well-defined.
This complete classification of units in $\mathbb{Z}[\omega]$ can be used to help determine when a given element in the ring of Eisenstein integers is prime.

\begin{proposition}
Let $\alpha \in \mathbb{Z}[\omega]$. If $N(\alpha)$ is a rational prime in the integers, then $\alpha$ is prime in the Eisenstein integers.
\end{proposition}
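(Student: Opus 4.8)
The plan is to reduce the claim to a statement about irreducibility and then appeal to the fact, established earlier in this section, that $\mathbb{Z}[\omega]$ is a unique factorization domain. In such a ring the notion of \emph{prime} introduced above coincides with that of an \emph{irreducible} element (one whose only factorizations involve a unit), so it suffices to show that $\alpha$ cannot be written as a product of two non-units.

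First I would assume a factorization $\alpha = \beta\gamma$ with $\beta,\gamma \in \mathbb{Z}[\omega]$ and apply the multiplicativity of the norm (the preceding Lemma) to obtain $N(\alpha) = N(\beta)\,N(\gamma)$. Since $N(\alpha)$ equals a rational prime $p$ by hypothesis, and since the norm takes nonnegative integer values, this is a factorization of $p$ in $\mathbb{Z}$. Because $\alpha \neq 0$ (its norm is $p \neq 0$), both $\beta$ and $\gamma$ are nonzero, so $N(\beta)$ and $N(\gamma)$ are positive integers. The primality of $p$ in $\mathbb{Z}$ then forces $\{N(\beta), N(\gamma)\} = \{1, p\}$.

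Next I would invoke the earlier Proposition that an element of $\mathbb{Z}[\omega]$ is a unit if and only if its norm equals $1$. Whichever of $\beta,\gamma$ has norm $1$ is therefore a unit, so every factorization of $\alpha$ is trivial; that is, $\alpha$ is irreducible. Finally, because $\mathbb{Z}[\omega]$ is a unique factorization domain, irreducibility implies primality, completing the argument.

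The only real subtlety is this last bridge from irreducible to prime, which fails in a general integral domain and rests essentially on the unique factorization property inherited from $\mathbb{Z}[\omega]$ being Euclidean. The norm computation itself is routine once multiplicativity is in hand; the care needed is simply to rule out the degenerate norm-zero case. It is also worth recording that the converse of this Proposition is false: an inert rational prime such as $2$ remains prime in $\mathbb{Z}[\omega]$ yet has norm $N(2) = 4$, which is not a rational prime.
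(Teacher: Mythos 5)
Your proof is correct and follows essentially the same route as the paper's: suppose $\alpha = \beta\gamma$, apply multiplicativity of the norm so that $N(\beta)\,N(\gamma)$ equals a rational prime, and conclude that one factor has norm $1$ and is therefore a unit. You are in fact slightly more careful than the paper on one point: since the paper defines \emph{prime} via the divisibility property, the factorization argument by itself only establishes irreducibility, and your explicit appeal to unique factorization in $\mathbb{Z}[\omega]$ (irreducible implies prime in a UFD) makes rigorous a step the paper's ``the result follows'' leaves implicit; your closing remark that the converse fails (e.g.\ $N(2)=4$ yet $2$ is prime in $\mathbb{Z}[\omega]$) is also accurate.
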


\begin{proof}
Let $\alpha \in \mathbb{Z}[\omega]$. Assume $N(\alpha)$ is prime and suppose $\alpha = \beta \cdot \gamma$ for some $\beta, \gamma \in \mathbb{Z}[\omega]$. Then:

\bigskip
\begin{align*}
N(\alpha) & = N(\beta \cdot \gamma)\\
& = N(\beta) \cdot N(\gamma).
\end{align*}
\bigskip

Because $N(\alpha)$ is by assumption an integer prime, we may assume without loss of generality that $N(\beta)=1$. Thus, $\beta$ is a unit, which implies that $\alpha$ and $\gamma$ are associates, and the result follows.
\end{proof}

It follows directly from the definitions and prior results that if $\alpha$ and $\gamma$ are associates in $\mathbb{Z}[\omega]$, then $N(\alpha) = N(\gamma)$. This is another property of the norm function that mimics the absolute value function over $\mathbb{R}$, and will be important in later sections.

\begin{example} The Eisenstein integer $1 - \omega$ is a prime in $\mathbb{Z}[\omega]$. This follows by simply applying our previous proposition:
\begin{align*}
N(1 - \omega) & = 1^2 - 1 \cdot (-1) + (-1)^2 \\
& = 1 + 1 + 1 \\
& = 3,
\end{align*}
\bigskip

which is prime in $\mathbb{Z}$. We leave it to the curious reader to verify that, up to multiplication by a unit, $1 - \omega$ is the prime of minimal norm value in $\mathbb{Z}[\omega]$.
\end{example}

The following lemma completely determines which integer primes remain prime in $\mathbb{Z}[\omega]$; a formal proof may be found in \cite{Cox2013}.

\begin{proposition}
Let $p$ be prime in $\mathbb{Z}$.
\begin{itemize}
\item If $p = 3$, then $p$ is not prime in $\mathbb{Z}[\omega]$.
\item If $p \equiv 1 \pmod{3}$, there exists a prime $\pi \in \mathbb{Z}[\omega]$ such that $p = \pi \overline \pi$, where $\overline \pi$ denotes the complex conjugate of $\pi$.
\item If $p \equiv 2 \pmod{3}$, then $p$ remains prime in $\mathbb{Z}[\omega]$.
\end{itemize}
\end{proposition}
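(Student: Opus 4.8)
The plan is to translate primality in $\mathbb{Z}[\omega]$ into arithmetic of the norm. Since $\mathbb{Z}[\omega]$ is a unique factorization domain, a nonzero non-unit is prime exactly when it is irreducible, i.e.\ cannot be written as a product of two non-units. I will also use that the norm is the squared complex modulus, so $N(\alpha) = \alpha\overline\alpha$ for every $\alpha$; this is a one-line check from $\omega = -\tfrac12 + \tfrac{\sqrt3}{2}i$. The key reduction is the following: if a rational prime $p$ factors as $p = \pi\rho$ in $\mathbb{Z}[\omega]$ with both factors non-units, then taking norms gives $p^2 = N(\pi)N(\rho)$ with $N(\pi), N(\rho) > 1$, forcing $N(\pi) = N(\rho) = p$; conversely, any element of norm $p$ is prime by the earlier proposition on prime norms. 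Thus the entire question of whether $p$ stays prime reduces to whether $p$ is represented by the form $a^2 - ab + b^2$.

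For $p = 3$ I would exhibit the obstruction directly: since $(1-\omega)^2 = 1 - 2\omega + \omega^2 = -3\omega$, we have $3 \mid (1-\omega)^2$, yet $3 \nmid (1-\omega)$, for otherwise $N(3) = 9$ would divide $N(1-\omega) = 3$. Hence $3$ violates the defining property of a prime. For $p \equiv 2 \pmod 3$ I would rule out any element of norm $p$ using the identity $4(a^2 - ab + b^2) = (2a-b)^2 + 3b^2$: reducing mod $3$ gives $p \equiv 4p \equiv (2a-b)^2 \pmod 3$, and a square is $\equiv 0$ or $1 \pmod 3$, so no solution exists when $p \equiv 2 \pmod 3$. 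Therefore $p$ admits no nontrivial factorization and remains prime.

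The substantive case is $p \equiv 1 \pmod 3$, where I must actually produce a factor, and this is where the main obstacle lies. The goal is to find an integer $m$ with $p \mid m^2 + 3$, i.e.\ to show $-3$ is a quadratic residue mod $p$. I would obtain this from the group structure of $(\mathbb{Z}/p\mathbb{Z})^\times$: since $3 \mid p - 1$, this cyclic group contains an element $t$ of order $3$, which satisfies $t^2 + t + 1 \equiv 0 \pmod p$, and then $m = 2t + 1$ gives $m^2 \equiv 4(t^2 + t) + 1 \equiv -3 \pmod p$. Writing $\sqrt{-3} = 1 + 2\omega$, the congruence $p \mid m^2 + 3$ becomes $p \mid (m + \sqrt{-3})(m - \sqrt{-3})$ in $\mathbb{Z}[\omega]$, while $p$ divides neither factor (that would require $p \mid 2$). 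Hence $p$ is not prime, so by the UFD property it is reducible, $p = \pi\rho$; the norm computation above forces $N(\pi) = p$, whence $\pi$ is prime, and finally $p = N(\pi) = \pi\overline\pi$ delivers the desired splitting. The delicate point throughout is exactly the existence of the element of order $3$ (equivalently, that $-3$ is a quadratic residue precisely when $p \equiv 1 \pmod 3$); everything after that is bookkeeping with norms.
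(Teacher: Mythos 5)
Your proof is correct and complete, but note that the paper does not actually prove this proposition---it defers to the literature (Cox, \emph{Primes of the Form $x^2+ny^2$}), so you have supplied a self-contained argument where the paper offers only a citation. Your route also differs from the standard reference proof in one substantive way: Cox obtains the splitting condition for $p \equiv 1 \pmod 3$ from quadratic reciprocity (computing the Legendre symbol $\left(\tfrac{-3}{p}\right)$), whereas you construct a square root of $-3$ modulo $p$ directly, taking an element $t$ of order $3$ in the cyclic group $(\mathbb{Z}/p\mathbb{Z})^\times$ (which exists precisely because $3 \mid p-1$) and setting $m = 2t+1$, so that $m^2 \equiv 4(t^2+t)+1 \equiv -3 \pmod p$. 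This is more elementary---no reciprocity needed---and it isolates exactly where the hypothesis $p \equiv 1 \pmod 3$ enters. The rest of your argument meshes well with the paper's own toolkit: the reduction of reducibility to representability by the norm form uses multiplicativity of $N$ (the paper's Lemma 2.8) and the characterization of units as norm-$1$ elements (Proposition 2.10); the conclusion that an element of norm $p$ is prime is exactly the paper's Proposition 2.11; and your identity $4(a^2-ab+b^2) = (2a-b)^2 + 3b^2$ gives a clean proof of the fact, asserted without proof in Section 2, that norms are only $0$ or $1$ modulo $3$, which settles the inert case $p \equiv 2 \pmod 3$. Two small points you handled correctly but tersely and that deserve the explicit justification you gave: the equivalence of prime and irreducible requires the UFD property (established in the paper via Proposition 2.9 and the Euclidean-implies-UFD chain), and the claim that $p$ divides neither $(m+1)+2\omega$ nor $(m-1)-2\omega$ rests on the observation that $p \mid a + b\omega$ in $\mathbb{Z}[\omega]$ forces $p \mid a$ and $p \mid b$ in $\mathbb{Z}$, which here would force $p \mid 2$, impossible for $p \equiv 1 \pmod 3$.
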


This proposition highlights some of the fundamental computational differences between $\mathbb{Z}$ and $\mathbb{Z}[\omega]$. For instance, observe that $3 = -\omega ^2 \cdot (1 - \omega) ^2$ (the reader may easily verify this statement). Consequently, the rational prime $3$, having a nontrivial factorization in the ring of Eisenstein integers, is no longer prime in $\mathbb{Z}[\omega]$. Similarly, note that $7 \equiv 1 \pmod{3}$, and that $7 = -\omega ^2 \cdot (2 - \omega) ^2$. Thus, the rational prime $7$ is not a prime in $\mathbb{Z}[\omega]$.

Another useful and necessary generalization from $\mathbb{Z}$ to $\mathbb{Z}[\omega]$ is that of evenness. As with the rational integers, we classify evenness in the Eisenstein integers via divisibility by a prime, specifically $1 - \omega$.

\begin{definition} 
An element $a + b\omega \in \mathbb{Z}[\omega]$ is \emph{even} if it is divisible by $1 - \omega$ - that is, if there exists some $c + d\omega \in \mathbb{Z}[\omega]$ such that $a + b\omega = (1 - \omega)(c + d\omega)$. Any element of $\mathbb{Z}[\omega]$ that is not divisible by $1 - \omega$ is said to be \emph{odd}.
\end{definition}

The reason for the reliance on $1 - \omega$ for determining evenness in $\mathbb{Z}[\omega]$ will be made clear in the next section. The following contains an easy computational test for verifying evenness in the Eisenstein sense.

\begin{theorem}
An Eisenstein integer $a + b \omega$ is even if and only if $a + b \equiv 0 \pmod{3}$.
\end{theorem}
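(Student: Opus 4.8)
The plan is to unwind the definition of evenness and match coefficients using the explicit multiplication formula established when verifying closure under multiplication. Recall that $a+b\omega$ is even precisely when $(1-\omega)\mid(a+b\omega)$, i.e. when there exist integers $c,d$ with $a+b\omega=(1-\omega)(c+d\omega)$. Writing $1-\omega=1+(-1)\omega$ and applying the product formula $(a+b\omega)(c+d\omega)=(ac-bd)+(ad+bc-bd)\omega$, I compute $(1-\omega)(c+d\omega)=(c+d)+(2d-c)\omega$. Because $\{1,\omega\}$ is a $\mathbb{Z}$-basis for $\mathbb{Z}[\omega]$, this representation is unique, so the equation $a+b\omega=(1-\omega)(c+d\omega)$ is equivalent to the integer system $a=c+d$ and $b=2d-c$.

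For the forward direction, I would assume $a+b\omega$ is even, take $c,d$ as above, and simply add the two equations to obtain $a+b=3d$, whence $a+b\equiv 0\pmod 3$. This half is immediate once the multiplication is carried out.

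For the converse, I would assume $3\mid(a+b)$ and attempt to solve the system $a=c+d$, $b=2d-c$ for integers $c,d$. Solving yields $d=(a+b)/3$ and $c=(2a-b)/3$; the first is an integer by hypothesis, and the second is an integer because $2a-b\equiv 2a+2b=2(a+b)\equiv 0\pmod 3$. Substituting these values back into $(1-\omega)(c+d\omega)$ recovers $a+b\omega$, exhibiting it as a multiple of $1-\omega$ and hence even.

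The only real subtlety — and the step I would be most careful about — is the integrality check for $c$ in the converse: divisibility of $a+b$ by $3$ must be shown to force $2a-b$ to be divisible by $3$ as well, since otherwise the candidate factor would leave $\mathbb{Z}[\omega]$. This is precisely the place where a purely norm-theoretic argument would stumble: one can observe at once that evenness implies $3=N(1-\omega)\mid N(a+b\omega)=a^2-ab+b^2\equiv(a+b)^2\pmod 3$, which gives the forward direction cleanly, but recovering divisibility of the \emph{element} (not merely its norm) from $3\mid(a+b)$ requires either the explicit construction above or an appeal to the primality of $1-\omega$ together with the fact that $1-\omega$ is an associate of its own conjugate. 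I would favor the direct construction, as it keeps the argument elementary and self-contained.
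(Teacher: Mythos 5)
Your proof is correct, and while it starts from the same key identity as the paper --- expanding $(1-\omega)(c+d\omega) = (c+d) + (2d-c)\omega$ --- it finishes the argument by a genuinely different and cleaner route. The paper handles the forward direction with a nine-case analysis of the residues of $c$ and $d$ modulo $3$ (writing out three cases and leaving six to the reader), and the converse with another nine-case analysis of the residues of $a$ and $b$, in each case exhibiting divisibility by splitting $a+b\omega$ into a multiple of $3$ plus a remainder such as $1+2\omega$ that is itself a multiple of $1-\omega$. You instead treat the coefficient equations $a = c+d$, $b = 2d-c$ as a linear system over $\mathbb{Z}$: adding them gives $a+b = 3d$ outright, a one-line forward direction that is in fact sharper than a congruence, and inverting the system produces the explicit cofactor $c = (2a-b)/3$, $d = (a+b)/3$, with integrality of $c$ secured by $2a - b \equiv 2(a+b) \equiv 0 \pmod{3}$. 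What your approach buys is uniformity and an explicit quotient with no cases deferred to the reader; what the paper's buys is stylistic consistency, since the same residue-class bookkeeping is reused in its subsequent norm lemma. Your closing observation is also well taken and correctly diagnosed: the congruence $N(a+b\omega) = a^2 - ab + b^2 \equiv (a+b)^2 \pmod{3}$ gives the forward implication instantly, but $3 \mid N(\alpha)$ does not by itself yield divisibility of the element $\alpha$ by $1-\omega$; one needs either your explicit construction or the primality of $1-\omega$ combined with the fact that $\overline{1-\omega} = 2+\omega = -\omega^2(1-\omega)$ is an associate of $1-\omega$ --- indeed, the paper's own norm-based evenness lemma closes exactly this gap by citing the case check in the present theorem.
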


\begin{proof}
Suppose $a + b \omega$ is even. We may then write $a + b\omega = (1 - \omega)(c + d\omega)$ for some $c + d \omega \in \mathbb{Z}[\omega]$. Simplifying yields
\begin{align*}
(a + b \omega) & = (1 - \omega) \cdot (c + d \omega)\\
& = c + d \omega - c \omega - d \omega ^2\\
& = c + d \omega - c \omega + d +d \omega\\
& = (c + d) + (2d - c)\omega.
\end{align*}
\bigskip

We must now consider the nine possible pairs of congruence classes modulo 3 to which $c$ and $d$ belong:

\begin{itemize}
\item If $c \equiv d \equiv 0 \pmod{3}$, it then easily follows that $a \equiv b \equiv 0 \pmod{3}$, and hence $a + b \equiv 0 \pmod{3}$.

\item If $c \equiv 0 \pmod{3}$ and $d \equiv 1 \pmod{3}$, it again follows that $a \equiv 1 \pmod{3}$ and $b \equiv 2 \pmod{3}$, and hence $a + b \equiv 0 \pmod{3}$.

\item If $c \equiv 0 \pmod{3}$ and $d \equiv 2 \pmod{3}$, it again follows that $a \equiv 2 \pmod{3}$ and $b \equiv 1 \pmod{3}$, and hence $a + b \equiv 0 \pmod{3}$.
\end{itemize}
\par
The remaining cases are similar and left to the reader. Thus, if $a + b \omega$ is even, then $a + b \equiv 0 \pmod{3}$. For the reverse direction, suppose that $a + b \equiv 0 \pmod{3}$. As above, we consider the nine possible pairs of congruence classes modulo 3 to which $a$ and $b$ belong. 
\begin{itemize}
\item If $a \equiv b \equiv 0 \pmod{3}$, then $a + b\omega = 3(c + d\omega)$ for some $c + d \omega \in \mathbb{Z}[\omega]$. But 3 is not prime in $\mathbb{Z}[\omega]$, and in fact 3 is divisible by $1 - \omega$. Thus, $a + b \omega$ is divisible by $1 - \omega$ as well, and hence $a + b \omega$ is even.
\item If $a \equiv 1 \pmod{3}$ and $b \equiv 2 \pmod{3}$, then $a = 3k + 1$ and $b = 3l + 2$ for some $k,l \in \mathbb{Z}$. Then $a + b\omega = (3k + 1) + (3l +2)\omega = 3(k + l\omega) + (1 + 2\omega)$. But both 3 and $1 + 2\omega$ are divisible by $1 - \omega$, the latter due to the fact that $(1 - \omega) \cdot \omega = 2 + \omega$. Again, we may conclude that $a + b\omega$ is even.
\end{itemize}

The remaining cases are similar in nature and left to the reader to verify.  Thus, if $a + b \equiv 0 \pmod{3}$, then $a + b\omega$ is even, and the result follows.
\end{proof}

Another useful property of the norm function is that it provides a different test for evenness in $\mathbb{Z}[\omega]$. This test is based on the fact that, given any $a + b\omega \in \mathbb{Z}[\omega]$, when considering the nine possible pairs of congruence classes modulo 3 to which $a$ and $b$ can belong, it is simple to show that either $N(a+b\omega)\equiv 0 \pmod{3}$ or $N(a+b\omega)\equiv 1 \pmod{3}$. This leads directly to the following result.

\begin{lemma}
An Eisenstein integer $a+b\omega$ is even if and only if $N(a+b\omega)\equiv 0 \pmod{3}$.
\end{lemma}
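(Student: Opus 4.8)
The plan is to reduce this statement to the previously established characterization of evenness, namely the theorem asserting that $a+b\omega$ is even if and only if $a+b\equiv 0\pmod 3$. Granting that result, it suffices to prove the purely rational-integer equivalence that $a+b\equiv 0\pmod 3$ if and only if $N(a+b\omega)=a^2-ab+b^2\equiv 0\pmod 3$. Chaining the two equivalences then yields the lemma directly.

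First I would observe that, working modulo $3$, we have $-ab\equiv 2ab$, so that
\[
N(a+b\omega)=a^2-ab+b^2\equiv a^2+2ab+b^2=(a+b)^2\pmod 3.
\]
This single congruence does essentially all of the work: it expresses the norm, modulo $3$, as a perfect square of $a+b$, thereby tying the norm condition to the quantity $a+b$ that already governs evenness.

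Next I would exploit the fact that $3$ is a rational prime. If $N(a+b\omega)\equiv 0\pmod 3$, then $(a+b)^2\equiv 0\pmod 3$, and since $3$ is prime it must divide $a+b$, giving $a+b\equiv 0\pmod 3$. Conversely, if $a+b\equiv 0\pmod 3$, then trivially $(a+b)^2\equiv 0\pmod 3$, whence $N(a+b\omega)\equiv 0\pmod 3$. This establishes the rational equivalence, and combining it with the earlier evenness theorem completes the argument.

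I do not anticipate any serious obstacle here; the only points requiring care are the reduction $-ab\equiv 2ab\pmod 3$, which relies specifically on $3$ being the modulus, and the appeal to the primality of $3$ needed to pass from $(a+b)^2\equiv 0$ to $a+b\equiv 0$. As a bonus, this same identity explains the paper's earlier remark that $N(a+b\omega)$ is always $\equiv 0$ or $1\pmod 3$, since $N\equiv(a+b)^2$ can only realize the quadratic residues $0$ and $1$ modulo $3$. An alternative but more pedestrian route would be to verify all nine congruence-class pairs for $(a,b)$ by hand, as foreshadowed in the surrounding text; the $(a+b)^2$ identity makes such case analysis unnecessary.
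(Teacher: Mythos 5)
Your proof is correct, but it follows a genuinely different route from the paper's. The paper proves the forward direction directly from the definition: it writes $a+b\omega=(1-\omega)(c+d\omega)$ and applies multiplicativity of the norm together with $N(1-\omega)=3$; for the converse it defers to a nine-case congruence check ``as seen in the proof of Theorem 2.14.'' You instead reduce the whole lemma to that earlier theorem (evenness is equivalent to $a+b\equiv 0\pmod 3$) by means of the single identity
\[
N(a+b\omega)=a^2-ab+b^2\equiv a^2+2ab+b^2=(a+b)^2\pmod 3,
\]
passing between $(a+b)^2\equiv 0$ and $a+b\equiv 0$ via the primality of $3$. There is no circularity, since the congruence theorem is established before the lemma and the paper itself invokes it for the reverse direction. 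Your approach buys two things: it dispatches both directions at once with no case analysis, and it proves, rather than merely asserts, the paper's surrounding remark that $N$ only takes values congruent to $0$ or $1$ modulo $3$, since these are exactly the quadratic residues modulo $3$. What the paper's argument offers in exchange is a forward direction that stands on its own, using only the multiplicativity of $N$ -- a technique that generalizes immediately to divisibility by any Eisenstein prime, not just $1-\omega$ -- whereas your argument is specifically adapted to the modulus $3$ and to the prior theorem. On balance your proof is the cleaner and more complete one for this particular statement.
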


\begin{proof}
Assume $a+b\omega$ is even -- say, $a+b\omega = (1-\omega)(c+d\omega)$ for some $c+d \omega$. Then by our previous work, we have that
\begin{align*}
N(a+b\omega) & = N((1-\omega)(c+d\omega))\\
& = N(1-\omega)N(c+d\omega)\\
& =(1^2-(1)(-1)+(-1)^2)(N(c+d\omega))\\
& = 3 \cdot N(c + d\omega)\\
& \equiv 0 \pmod{3}.
\end{align*}
\bigskip

On the other hand, if $N(a + b \omega) \equiv 0 \pmod{3}$, then a simple case-by-case check, as seen in the proof of Theorem 2.14, shows that all such elements $a+b\omega$ are divisible by $1 - \omega$, and the result follows.
\end{proof}

This result, together with the fact that the norm function $N$ only achieves integer values that are congruent to $0$ or $1$ modulo 3, leads directly to the following corollary.

\begin{corollary}
An Eisenstein integer $a+b\omega$ is not even if and only if $N(a+b\omega)\equiv 1 \pmod{3}$.
\end{corollary}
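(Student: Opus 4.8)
The plan is to obtain this corollary directly from the preceding lemma, using no new computation beyond the range restriction on the norm that was recorded just above the statement. The lemma already furnishes the biconditional that $a+b\omega$ is even if and only if $N(a+b\omega)\equiv 0 \pmod{3}$. Negating both sides of that biconditional immediately yields that $a+b\omega$ is \emph{not} even if and only if $N(a+b\omega)\not\equiv 0 \pmod{3}$. This is the entire content of one direction of the logical work; the remaining task is simply to rewrite the condition $N(a+b\omega)\not\equiv 0\pmod 3$ in the sharper form $N(a+b\omega)\equiv 1\pmod 3$.

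The second step is precisely where the range fact becomes indispensable. Because the norm is given by $N(a+b\omega)=a^2-ab+b^2$, reducing this expression modulo $3$ across the nine residue pairs $(a,b)\pmod 3$ shows that the output is always congruent to either $0$ or $1$, and never to $2$. Having only these two possible residues, the statement ``$N(a+b\omega)$ is not divisible by $3$'' is logically equivalent to the statement ``$N(a+b\omega)\equiv 1\pmod 3$.'' Substituting this equivalence into the negated biconditional from the first step then delivers exactly the claim: $a+b\omega$ is not even if and only if $N(a+b\omega)\equiv 1\pmod 3$.

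I do not expect any genuine obstacle here, since both the lemma and the two-valued range of $N$ modulo $3$ are already in hand; the argument is a short logical manipulation rather than a fresh derivation. If there is any subtlety at all, it lies in being explicit that the dichotomy ``even versus not even'' lines up cleanly with the dichotomy ``$0$ versus $1$ modulo $3$'' only because the residue $2$ is excluded — were $N$ able to attain the value $2\pmod 3$, the corollary would fail, and the negation of ``$\equiv 0$'' could not be collapsed to ``$\equiv 1$.'' Flagging this dependence on the range claim is the one point I would state carefully, after which the result follows at once.
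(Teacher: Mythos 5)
Your proposal is correct and follows exactly the paper's reasoning: the paper derives the corollary directly from the preceding lemma together with the stated fact that $N$ only attains values congruent to $0$ or $1 \pmod{3}$, which is precisely the negation-plus-range argument you give. Your explicit flagging of the dependence on excluding the residue $2$ is a nice touch, but there is no substantive difference in approach.
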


%section3
\section{The $\sigma$ Function and Mersenne Primes in $\mathbb{Z}[\omega]$}\label{sec:mersenne}

An important number theoretic tool that can be extended from  $\mathbb{Z}$ to $\mathbb{Z}[\omega]$ and will be useful for our purposes is the classic sum of divisors function.

\begin{definition}
Given any $n \in \mathbb{Z}$, the \emph{sum of divisors function} $\sigma(n)$ denotes the sum of positive factors of $n$. That is,
\begin{center}
$\sigma(n) = \sum \limits_{d | n} d$.
\end{center}
\end{definition}

It should be noted that perfect numbers are positive integer solutions to the equation $\sigma(n) = 2n$. The sum of divisors function has several useful and well-known properties, including those in the next proposition, whose proof is left to the reader.

\begin{proposition}
Let $p$ be a prime and $n$ be any integer. Then:
\begin{itemize}
\item $\sigma(p) = p+1$
\item $\sigma(p^{n}) = \frac{p^{n+1}-1}{p-1}$
\item If $m$ and $n$ are relatively prime, then $\sigma(m \cdot n) = \sigma(m) \cdot \sigma(n)$.
\end{itemize}
\end{proposition}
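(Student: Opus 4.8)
The plan is to dispatch the three claims in turn, each resting on unique factorization in $\mathbb{Z}$ together with elementary bookkeeping of divisors. For the first claim I would simply invoke the definition of a rational prime: the only positive divisors of $p$ are $1$ and $p$, so $\sigma(p) = 1 + p = p + 1$. For the second claim I would first argue that the complete list of positive divisors of $p^n$ is $1, p, p^2, \ldots, p^n$; this follows from unique factorization, since any positive divisor of $p^n$ is a product of primes dividing $p^n$, the only such prime being $p$, so every divisor has the form $p^k$ with $0 \le k \le n$. Summing this finite geometric series and applying the standard closed form (valid because $p \ge 2 \ne 1$) gives
$$\sigma(p^n) = \sum_{k=0}^{n} p^k = \frac{p^{n+1} - 1}{p - 1}.$$

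The third claim is where the genuine work lies, and I expect the bijection on divisors to be the main obstacle. The strategy is to show that the multiplication map $(d_1, d_2) \mapsto d_1 d_2$ is a bijection from the set of pairs $\{(d_1, d_2) : d_1 \mid m,\ d_2 \mid n\}$ onto the set of positive divisors of $mn$, exploiting the hypothesis $\gcd(m,n) = 1$. That each product $d_1 d_2$ divides $mn$ is immediate, and injectivity is routine; the crux is surjectivity, namely that every divisor $d$ of $mn$ factors as $d = d_1 d_2$ with $d_1 \mid m$ and $d_2 \mid n$. I would establish this by setting $d_1 = \gcd(d, m)$ and $d_2 = \gcd(d, n)$ and verifying that $d = d_1 d_2$ with $d_1 \mid m$ and $d_2 \mid n$; this is precisely the point where coprimality of $m$ and $n$ is indispensable, since without it a prime could be apportioned between $m$ and $n$ in more than one way and the factorization would fail to be unique.

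Once the bijection is in hand, the sum over divisors of $mn$ splits as a product of independent sums, and the result follows directly:
$$\sigma(mn) = \sum_{d \mid mn} d = \sum_{d_1 \mid m}\ \sum_{d_2 \mid n} d_1 d_2 = \Big(\sum_{d_1 \mid m} d_1\Big)\Big(\sum_{d_2 \mid n} d_2\Big) = \sigma(m)\,\sigma(n).$$
The only subtlety worth flagging is that the interchange and factoring of the double sum is legitimate precisely because the index sets $\{d_1 : d_1 \mid m\}$ and $\{d_2 : d_2 \mid n\}$ are independent, which is again guaranteed by the coprimality assumption; everything else is formal manipulation.
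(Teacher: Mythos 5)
The paper offers no proof of this proposition --- it is explicitly left to the reader --- so there is no argument of the authors' to compare against; your proof stands on its own, and it is correct. The first two claims are handled exactly as expected, and for multiplicativity you use the standard bijection $(d_1, d_2) \mapsto d_1 d_2$ from pairs of divisors of $m$ and $n$ onto divisors of $mn$, with surjectivity secured by taking $d_1 = \gcd(d,m)$ and $d_2 = \gcd(d,n)$; the identity $d = \gcd(d,m)\cdot\gcd(d,n)$ is indeed exactly where $\gcd(m,n)=1$ enters, since for coprime $m,n$ one has $\gcd(d,mn) = \gcd(d,m)\cdot\gcd(d,n)$, and $\gcd(d,mn) = d$ because $d \mid mn$. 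One small point of bookkeeping: your closing remark credits coprimality with justifying the factoring of the double sum, but that step is pure distributivity over finite index sets and needs no hypothesis at all --- the index sets $\{d_1 : d_1 \mid m\}$ and $\{d_2 : d_2 \mid n\}$ are independent regardless. Coprimality is consumed entirely in the reindexing, i.e., in the bijection itself: surjectivity as above, and injectivity (which you call routine, fairly) via $\gcd(d_1, d_2') \mid \gcd(m,n) = 1$. With that attribution corrected, the argument is complete and is the standard one this paper implicitly has in mind.
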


In particular, note that $\sigma (n)$ is multiplicative. A direct consequence of these properties is that we can compute the value of $\sigma (n)$ at every positive integer $n$. That is, writing $n = p_1^{\alpha _1}  p_2^{\alpha _2}  \cdots  p_s^{\alpha _s}$ where $p_1, \ldots, p_s$ are distinct primes and $\alpha _1, \ldots, \alpha _s$ are positive integers, then

\bigskip
\begin{align*}
\sigma(n) & = \sigma(p_1^{\alpha _1}) \cdot \sigma(p_2^{\alpha _2}) \cdots \sigma(p_s^{\alpha _s}) \\
& = \frac{p_1 ^{\alpha _1 +1} -1}{p_1 -1} \cdot \frac{p_2 ^{\alpha _2 +1} -1}{p_2 -1} \cdots \frac{p_s ^{\alpha _s +1} -1}{p_s -1} \\
& = \prod \limits_{j=1}^{s} \frac{p_j ^{\alpha _j +1} -1}{p _j - 1}.
\end{align*}
\bigskip

To extend $\sigma$ to a well-defined function $\sigma^\star$ on $\mathbb{Z}[\omega]$, we take advantage of that fact that the ring of Eisenstein integers features unique factorization, together with the geometric properties inherent in the multiplication of complex numbers. More precisely, recall that every Eisenstein integer $\alpha$ has six associates, since there are six units in $\mathbb{Z}[\omega]$. Initially, one has the freedom to select any of these six associates to serve as the representative when computing something akin to a sum of divisors in $\mathbb{Z}[\omega]$. However, the $\sigma$--function features the property that $\sigma(n) \geq n$ for every $n \in \mathbb{N}$. Thus, it would be advantageous to pick a particular associate $\alpha '$ of $\alpha$ that satisfies

\begin{center}
$N(\sigma^{\star}(\alpha ')) \geq N(\alpha ')$.
\end{center}
\bigskip

The associates that satisfy this inequality are precisely those located in our shaded region of the complex plane, as indicated in Figure~\ref{fig:tri}. We may assume without loss of generality that the shaded region in Figure~\ref{fig:tri} contains the Eisenstein integers located on the lower boundary (and not the upper boundary). Note that this shaded region contains the Eisenstein integer $a + b\omega$ if and only if $a > b \geq 0$.

\begin{center}
\begin{figure}[h]
\begin{tikzpicture}[scale=2]
[every node/.style=draw, every label/.style=draw]
\color{grey}
\draw [ultra thin] (1,0) -- ++(0:2) -- ++(60:1) -- ++(120:1) -- ++(180:6) -- ++(-120:1) -- ++(-60:1) -- ++(-120:1) -- ++(-60:1) -- ++(0:6) -- ++(60:1) -- ++(120:3) -- ++(-120:4) -- ++(120:4) -- ++(-120:2) -- ++(-60:2) -- ++(60:4) -- ++(-60:4) -- ++(60:3) -- ++(180:6) -- ++(-60:3) -- ++(60:4) -- ++(-60:3) -- ++(180:5) -- ++(60:3) -- ++(-60:4) -- ++(0:2) -- ++(60:1) -- ++(180:7) -- ++(-60:1) -- ++(60:2) -- ++(120:2) -- ++(-120:1) -- ++(0:1) -- ++(-60:3) -- ++(0:2) -- ++(60:4) -- ++(-60:1) -- ++(-120:2) -- ++(-60:1);
\draw [ultra thin] (-3,0) -- (3,0);
\draw [ultra thin] (0,-1.7) -- (0,1.7);
\filldraw (1,0)  ++(0:1) circle(1pt) ++(120:1) circle(1pt) ++(0:1) circle(1pt) ++(-120:1) circle(1pt) ++(0:1) circle(1pt) ++(120:2) circle(1pt) ++(-120:1) circle(1pt) ++(-120:1) circle(1pt) ++(-120:1) circle(1pt) ++(0:1) circle(1pt) ++(120:1) circle(1pt) ++(0:1) circle(1pt) ++ (-120:1) circle(1pt) ++(0:1) circle(1pt) ++(120:1) circle(1pt) ++(0:1) circle(1pt) ++(-120:1) circle(1pt) ++(0:1) circle(1pt) ++(120:1) circle(1pt) ++(60:1) circle(1pt) ++(180:1) circle(1pt) ++(60:1) circle(1pt) ++(180:1) circle(1pt) ++(180:1) circle(1pt) ++(-120:1) circle(1pt) ++(0:1) circle(1pt) ++(120:1) circle(1pt) ++(180:1) circle(1pt) ++(-60:1) circle(1pt) ++(-60:1) circle(1pt) ++(180:1) circle(1pt) ++(60:1) circle(1pt) ++(180:1) circle(1pt) ++(60:1) circle(1pt) ++(180:1) circle(1pt) ++(-60:1) circle(1pt) ++(-60:1) circle(1pt) ++(-60:1) circle(1pt) ++(180:1) circle(1pt) ++(60:1) circle(1pt) ++(120:1) circle(1pt) ++(-120:1) circle(1pt) ++(-60:1) circle(1pt) ++(-60:1) circle(1pt) ++(60:1) circle(1pt) ++(-60:1) circle(1pt) ++(60:1) circle(1pt) ++(-60:1) circle(1pt) ++(60:1) circle(1pt) ++(-60:1) circle(1pt) ++(60:1) circle(1pt) ++(-120:1) circle(1pt) ++(180:6) circle(1pt) ++(60:1) circle(1pt) ++(-60:1) circle(1pt) ++(60:1) circle(1pt) ++(-60:1) circle(1pt) ++(60:1) circle(1pt) ++(180:2) circle(1pt) ++(120:1) circle(1pt) ++(0:1) circle(1pt) ++(-120:1) circle(1pt) ++(0:1) circle(1pt) ++(60:1) circle(1pt) ++(180:1) circle(1pt) ++(-60:1) circle(1pt) ++(60:3) circle(1pt) ++(180:3) circle(1pt) ++(-60:1) circle(1pt) ++(60:1) circle(1pt) ++(-60:1) circle(1pt) ++(60:1) circle(1pt) ++(-60:1) circle(1pt) ++(180:3) circle(1pt) ++(-60:1) circle(1pt) ++(60:1) circle(1pt) ++(-60:1) circle(1pt) ++(60:1) circle(1pt) ++(-60:1) circle(1pt) ++(180:2) circle(1pt) ++(-120:1) circle(1pt) ++(0:1) circle(1pt) ++(-120:1) circle(1pt) ++(120:1) circle(1pt) ++(60:3) circle(1pt) ++(180:1) circle(1pt) ++(-120:1) circle(1pt) ++(0:7) circle(1pt) ++(120:1);

\coordinate (A1) at (0,0);
\coordinate (A2) at ++(60:2.1);
\coordinate (A3) at ++(0:4.1);
\coordinate (A4) at ++(30.5:3.56);

\draw[dashed, ultra thick, black] (A1) -- (A2);
\draw[ultra thick, black] (A1) -- (A3);

\shade [left color=black,right color=grey, opacity=0.4] (A1) -- (A3) -- (A4)  -- (A2) -- cycle;

\color{magenta}
\node [label={[label distance=1.9cm]58:$1 + \omega$}]{};
\node [label={[label distance=1.3cm]313:$- \omega$}]{};
\node[above right] at (1,0) {$1$};
\node[above right] at (-1,0) {$-1$};
\node [label={[label distance=1.65cm]99:$\omega$}]{};
\node [label={[label distance=.88cm]267:$-1 - \omega$}]{};
\color{dark blue}
\node[above left] at (0,0) {$O$};

\end{tikzpicture}
\caption{}
\label{fig:tri}
\end{figure}
\end{center}

Using these particular associates in $\mathbb{Z}[\omega]$ allows us to generalize $\sigma (n)$ in a well-defined manner. Moreover, selecting these particular prime associates in $\mathbb{Z}[\omega]$ allows, among other things, for $\mathbb{Z}[\omega]$ to contain all primes in $\mathbb{Z}$ that remain prime in $\mathbb{Z}[\omega]$. Consequently, we may think of $\sigma^\star$ as being an extension of $\sigma$ from $\mathbb{Z}$ to $\mathbb{Z}[\omega]$.

\newpage

Let $\nu \in \mathbb{Z}[\omega]$ be arbitrary. Via unique factorization in $\mathbb{Z}[\omega]$ and without loss of generality, $\nu$ can be written as

\begin{center}
$\nu = \epsilon \prod \limits_{q=1}^{s}{\pi _q ^{k_q}}$,
\end{center}

\bigskip

\begin{flushleft}
where $\epsilon$ is a unit, each $\pi _q$ is a prime in the shaded section of Figure~\ref{fig:tri}, and each $k_q$ is a positive integer. For the rest of this paper, we will assume that all prime factorizations of elements in $\mathbb{Z}[\omega]$ are of this form. We can now define the complex sum of divisors function $\sigma ^ \star (\nu)$ in the following manner.
\end{flushleft}

\begin{definition}
Given any $\nu = \epsilon \prod \limits_{q=1}^{s}{\pi _q ^{k_q}}$ in $\mathbb{Z}[\omega]$, the expression $\sigma ^\star (\nu)$, the \emph{complex sum of divisors function} evaluated at $\nu$, is defined as
\begin{center}
$\sigma ^\star (\nu) = \prod \limits_{q=1}^{s} \frac{\pi _q ^{ k_q +1} -1}{\pi _q - 1}$.
\end{center}
\end{definition}
\bigskip

Note that if $\nu \in \mathbb{Z}[\omega]$ has no imaginary part, then $\nu \in \mathbb{Z}$, in which case the definition of $\sigma ^\star$ is equivalent to that of $\sigma$ in $\mathbb{Z}$. Moreover, $\sigma ^\star$ is multiplicative over $\mathbb{Z}[\omega]$, a property that will play a key role when attempting to define perfect numbers in $\mathbb{Z}[\omega]$.

We will now generalize the concepts of Mersenne numbers and Mersenne primes from the rational integers to $\mathbb{Z}[\omega]$. To begin, we first give a somewhat different motivation for the structure of Mersenne primes.

Consider the expression $b^n - 1$, where $n$ is any positive integer and, for the moment, assume that $b \in \mathbb{Z}$. Because $b^n - 1 = (b - 1)(b^{n-1} + b^{n-2} + \cdots + b + 1)$, in order for $b^n - 1$ to be prime in $\mathbb{Z}$, exactly one of $b - 1$ and $b^{n-1} + b^{n-2} + \cdots + b + 1$ must be a unit in $\mathbb{Z}$. As Pershell and Huff demonstrate in \cite{Pershell2002}, this is only possible when $b - 1$ is a unit -- that is, when $b - 1 = \pm 1$, in which case $b = 0$ or $b = 2$. Clearly, $b = 0$ yields no prime, and so only the $b = 2$ case is worth pursuing.

Now consider the expression $b^n - 1$, but assume that $b \in \mathbb{Z}[\omega]$. Once again, in order for $b^n - 1$ to be prime in $\mathbb{Z}[\omega]$, exactly one of $b - 1$ and $b^{n-1} + b^{n-2} + \cdots + b + 1$ must be one of the six units in $\mathbb{Z}[\omega]$. There are four possibilities:

\begin{itemize}
\item If $b - 1 = \pm 1$, we again find that $b = 2$ or $b = 0$. The former yields rational Mersenne primes, while the latter yields no primes.
\item If $b - 1 = \omega$, then $b = 1 + \omega = -\omega^2$, which is a unit, and the corresponding expression $b^n - 1$ yields only the primes $\omega - 1$ and $\omega^2 - 1$.
\item If $b - 1 = \omega^2$, then $b = 1 + \omega^2 = -\omega$, which is a unit, and again the corresponding expression $b^n - 1$ yields only two primes, which are complex conjugates of the last case.
\item Finally, if $b - 1 = - \omega$, then $b = 1 - \omega$ (the case where $b - 1 = - \omega^2$ differs only by multiplication by a unit, and yields no new information). This particular case requires further investigation.
\end{itemize}

As demonstrated in \cite{Pershell2002}, when considering the possibility of $b^{n-1} + b^{n-2} + \cdots + b + 1$ being a unit, no new information is gleaned. Thus, the expression $b^n - 1$ can only be prime in $\mathbb{Z}[\omega]$ when $b = 1 - \omega$. The curious reader can refer to the aforementioned work by Pershell and Huff for more information.

It is for these reasons that the prime $2$ plays a key role when defining a Mersenne prime in $\mathbb{Z}$, and why $1 - \omega$ should play the same role when attempting to generalize Mersenne primes to $\mathbb{Z}[\omega]$. Moreover, since the prime of smallest norm in $\mathbb{Z}[\omega]$ is $1 - \omega$, it seems reasonable to use $1 - \omega$ in this context. Consequently, the following definitions should come as no surprise.

\begin{definition}
Given any positive rational integer $k$, an \emph{Eisenstein Mersenne number}, denoted $M_k$, is an element of $\mathbb{Z}[\omega]$ of the form $(1 - \omega)^k - 1$, and an \emph{Eisenstein Mersenne prime} is a prime in $\mathbb{Z}[\omega]$ of the form $(1 - \omega)^p - 1$ where $p$ is a prime in $\mathbb{Z}$.
\end{definition}

Below we provide examples of both prime and non-prime Eisenstein Mersenne numbers. Note the role that the norm function plays in these verifications.

\begin{example}
Let $k = 2$ (which of course is prime in $\mathbb{Z}$) in the definition of an Eisenstein Mersenne number. The resulting expression simplifies to
\begin{align*}
M_2 & = (1 - \omega)^2 - 1 \\& = (1 - 2 \omega + \omega ^2) - 1\\
& = -2 \omega + \omega ^2\\
& = -2 \omega - 1 - \omega\\
& = -1 - 3 \omega.
\end{align*}
\bigskip

We now compute the norm of this simplified Eisenstein integer:

\bigskip
\begin{align*}
N(-1 - 3 \omega) & = (-1)^2 - (-1)(-3) + (-3)^2\\
& = 1 - 3 +9\\
& = 7.
\end{align*}
Because $7$ is prime in $\mathbb{Z}$, we know via Lemma 2.7 that $M_2 = (1 - \omega)^2 - 1$ is prime in $\mathbb{Z}[\omega]$ and hence is an Eisenstein Mersenne prime.
\end{example}

\begin{example}
Let $k = 3$ in the definition of an Eisenstein Mersenne number. Then

\bigskip
\begin{align*}
M_3 & = (1 - \omega)^3 - 1\\
& = (1 - 3 \omega + 3 \omega ^2 - \omega ^3) -1\\
& = -3 \omega + 3 \omega ^2 - \omega ^3\\
& = -3 \omega - 3 - 3 \omega - 1\\
& = -4 - 6 \omega.
\end{align*}
\bigskip

Since both coefficients are even rational integers, it is clear that this Eisenstein integer can be factored in a nontrivial way in $\mathbb{Z}[\omega]$. But the norm function also verifies this:

\bigskip
\begin{align*}
N(-4 - 6 \omega) & = (-4)^2 - (-4)(-6) + (-6)^2\\
& = 16 - 24 +36\\
& = 28.
\end{align*}
\bigskip
As a result, the Eisenstein Mersenne number $M_3 = (1 - \omega)^3 - 1$ is not prime in $\mathbb{Z}[\omega]$.
\end{example}

Eisenstein Mersenne primes are in some sense even rarer than Mersenne primes in $\mathbb{Z}$. For instance, there are 12 rational primes of the form $2^n - 1$ with $n \leq 160$, but only 7 Eisenstein Mersenne primes exist in the same exponent range, namely $n = 2, 5, 7, 11, 17, 19$ and $79$. The curious reader may consult \cite{Pershell2002} for more information.

Observe that the exponent in the expression $(1 - \omega)^k - 1$  is necessarily prime in $\mathbb{Z}$; if it were not, $(1 - \omega)^k - 1$ would have, as is the case with Mersenne numbers in $\mathbb{Z}$, at least one nontrivial factor. It is also important to reiterate that having a prime exponent in the expression $(1 - \omega)^k - 1$ is a necessary but not sufficient condition to guarantee the primality of the corresponding Eisenstein Mersenne number.

%section4
\section{Perfect Numbers in $\mathbb{Z}[\omega]$}\label{sec:perfect}

Recall that a perfect number is a positive integer $n$ such that $\sigma(n) = 2n$. The use of our complex sum-of-divisors function $\sigma^\star$ allows us to generalize perfect numbers in a seemingly natural way from $\mathbb{Z}$ to $\mathbb{Z}[\omega]$.

\begin{definition}
An Eisenstein integer $\alpha$ is said to be \emph{perfect} if $\sigma ^ \star (\alpha) = (1- \omega) \cdot \alpha$.
\end{definition}

This characterization of perfectness in the Eisenstein integers seems both logical and appropriate. But as others, such as Spira in \cite{Spira1961} and McDaniel in \cite{McDaniel1974} have noted, this particular generalization of perfect numbers, while natural to consider, is somewhat limited. Using a norm function, both Spira and McDaniel gave an alternative definition of perfect numbers in $\mathbb{Z}[i]$, the ring of Gaussian integers. We follow suit, noting that $1 - \omega$ is the prime of minimal norm in $\mathbb{Z}[\omega]$, and in fact $N(1 - \omega) = 3$.

\begin{definition}
An Eisenstein integer $\alpha$ is said to be \emph{norm--perfect} if $N[\sigma^\star(\alpha)] = 3 \cdot N[\alpha]$.
\end{definition}

A moment's thought reveals that every perfect Eisenstein integer is in fact a norm--perfect Eisenstein integer. Regardless of which definition we choose to investigate, one key point must be addressed. Namely, in Euclid's proof of the structure of even perfect numbers in $\mathbb{Z}$, we find the following:

\bigskip
\begin{center}
$\sigma(2^k - 1) = 2^k - 1 + 1 = 2^k$ 
\end{center}
\bigskip

This statement is valid, of course, because the expression $2^k -1$ is a Mersenne prime. However, in our generalized context, the complex sum of divisors function $\sigma^\star$ requires inputs that are specific associates, namely, those that live in our region as depicted in Figure~\ref{fig:tri}. More precisely, we hope that the following computation is correct:

\begin{center}
$\sigma^\star((1 - \omega)^p - 1) = (1 - \omega)^p -1 + 1 = (1 - \omega)^p$
\end{center}
\bigskip

This computation is only valid, however, if the Eisenstein Mersenne prime in question is the correct associate. In order to verify if this is the case, we must analyze expressions of the form $(1 - \omega)^p$ to determine the values of $p$ that result in $M_p$ belonging to our region of usable associates. In what follows, we rewrite $(1 - \omega)^p$ as $a + b\omega$, where $a$ and $b$ are rational integers, and $h = \lfloor{\frac{p}{2}} \rfloor$. Due to the periodic behavior of $a$ and $b$ as $p$ increases, we need only consider the values of $p$ modulo 12, which are given in the top row of the table.

\begin{table}[h]
\setlength\extrarowheight{5pt}
\centering
\caption{Values of $a$ and $b$ such that $(1 - \omega)^p = a + b\omega$}
\label{my-label}
\begin{tabular}{|l|l|l|l|l|l|l|l|l|l|l|l|l|}
\hline
$p$ & 1 & 2 & 3 & 4 & 5 & 6 & 7 & 8 & 9 & 10 & 11 & 12 \\
\hline
$a$ & $3^h$ & 0 & $-3^h$ & $-3^h$ & $-2 \cdot 3^h$ & $-3^h$ & $-3^h$ & 0 & $3^h$ & $3^h$ & $2 \cdot 3^h$ & $3^h$ \\
\hline
$b$  & $-3^h$ & $-3^h$ & $-2 \cdot 3^h$ & $-3^h$ & $-3^h$ & 0 & $3^h$ & $3^h$ & $2 \cdot 3^h$ & $3^h$ & $3^h$ & 0 \\
\hline
\end{tabular}
\end{table}

It is clear that the only values of $p$ which result in the corresponding Eisenstein Mersenne prime being the appropriate associate are $p \equiv 11 \pmod{12}$.

Classifying perfect numbers in $\mathbb{Z}[\omega]$ using the norm--perfect approach suits our purposes well. Note that the structure of the even norm--perfect Eisenstein integer $\alpha$ in the next result parallels the characterization of even perfect numbers that first appeared in the works of \emph{Euclid}.

\begin{theorem}
Given any rational integer $k > 1$, if $(1-\omega)^k -1$ is an Eisenstein Mersenne prime and if $k \equiv 11 \pmod{12}$, then $\alpha = (1 - \omega)^{k - 1} [(1-\omega)^k -1]$ is an even norm--perfect Eisenstein integer.
\end{theorem}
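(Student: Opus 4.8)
The plan is to prove the two defining features of $\alpha$ separately: that it is even, and that it is norm--perfect. Evenness is immediate, since $k > 1$ gives $k - 1 \geq 1$, so $1-\omega$ divides $(1-\omega)^{k-1}$ and hence divides $\alpha = (1 - \omega)^{k - 1}[(1-\omega)^k -1]$; by the definition of evenness in $\mathbb{Z}[\omega]$, $\alpha$ is even. The substance of the theorem is the identity $N[\sigma^\star(\alpha)] = 3\,N[\alpha]$, and the whole argument turns on computing $\sigma^\star(\alpha)$ through the canonical (shaded--region) prime factorization of $\alpha$.

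The first and most delicate step is to record that $1 - \omega$ is itself \emph{not} the shaded--region associate, since its coordinates $a = 1$, $b = -1$ fail $a > b \geq 0$. Multiplying by units shows that the correct associate is $2 + \omega = -\omega^2(1-\omega)$, which satisfies $2 > 1 \geq 0$ and has norm $3$. Crucially, $2 + \omega = \overline{1 - \omega}$ is the complex conjugate of $1-\omega$, a fact I would verify directly from $\overline{\omega} = \omega^2 = -1-\omega$. Writing $M = (1-\omega)^k - 1$, I would then express $\alpha$ in canonical form as $\alpha = \varepsilon\,(2+\omega)^{k-1} M$ for a unit $\varepsilon$. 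To apply the definition of $\sigma^\star$ I must confirm that $M$ is a distinct canonical prime: the hypothesis that $(1-\omega)^k - 1$ is an Eisenstein Mersenne prime supplies primality, the hypothesis $k \equiv 11 \pmod{12}$ together with Table~\ref{my-label} places $M = (2\cdot 3^h - 1) + 3^h\omega$ (with $h = \lfloor k/2\rfloor \geq 1$) in the shaded region, and $M$ is odd, hence not an associate of $2+\omega$.

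With the canonical factorization in hand, the definition of $\sigma^\star$ gives
\[
\sigma^\star(\alpha) = \frac{(2+\omega)^k - 1}{(2+\omega)-1}\cdot\frac{M^2 - 1}{M - 1} = \frac{(2+\omega)^k - 1}{1+\omega}\,(M+1).
\]
Taking norms and using multiplicativity of $N$, I would evaluate each factor: first, $N(M+1) = N((1-\omega)^k) = 3^k$; second, since $1 + \omega = -\omega^2$ is a unit, $N\!\left[\frac{(2+\omega)^k - 1}{1+\omega}\right] = N((2+\omega)^k - 1)$. The key simplification is that $(2+\omega)^k - 1 = \overline{(1-\omega)^k} - 1 = \overline{M}$, so its norm equals $N(M)$. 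Hence $N[\sigma^\star(\alpha)] = 3^k\,N(M)$.

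Finally, $N[\alpha] = N((1-\omega)^{k-1})\,N(M) = 3^{k-1}N(M)$, so $3\,N[\alpha] = 3^k N(M) = N[\sigma^\star(\alpha)]$, establishing norm--perfection. I expect the main obstacle to be the bookkeeping around associates: one must resist feeding $1-\omega$ itself into $\sigma^\star$ and instead use its shaded--region associate $2+\omega$, at which point the observation that $2+\omega = \overline{1-\omega}$ (so that $(2+\omega) - 1 = 1+\omega$ is a unit and $(2+\omega)^k - 1 = \overline{M}$ has the same norm as $M$) is exactly what makes the norms collapse to the desired $3^k N(M)$.
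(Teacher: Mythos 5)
Your proof is correct, and it shares the paper's overall skeleton: multiplicativity of $\sigma^\star$, the geometric-series evaluation at the even prime power, $\sigma^\star$ of the Mersenne prime collapsing to the prime plus one (legitimized by $k \equiv 11 \pmod{12}$ and Table 1), and multiplicativity of $N$. Where you genuinely diverge is in the associate bookkeeping for the even prime, and here you are actually more faithful to the paper's own definitions than the paper is. The paper's proof feeds $1-\omega$ directly into the divisor-sum formula, writing $\sigma^\star\left((1-\omega)^{k-1}\right) = \frac{(1-\omega)^k - 1}{(1-\omega)-1}$, even though $1-\omega$ (with $a = 1$, $b = -1$) fails the condition $a > b \geq 0$ and so is not the shaded-region representative that the definition of $\sigma^\star$ prescribes; it then finishes symbolically, absorbing the unit $-\omega^2$ and regrouping factors to reach $N(1-\omega)\cdot N(\alpha) = 3N(\alpha)$ without ever computing $3^k$ explicitly. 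You instead pass to the canonical associate $2+\omega = -\omega^2(1-\omega)$, and your key observation that $2+\omega = \overline{1-\omega}$, whence $(2+\omega)^k - 1 = \overline{M}$ (for $M = (1-\omega)^k - 1$) and $N\left((2+\omega)^k - 1\right) = N(M)$, is a small extra lemma absent from the paper that makes the canonical computation go through; you then finish numerically via $N[\sigma^\star(\alpha)] = 3^k N(M) = 3\cdot 3^{k-1}N(M) = 3N[\alpha]$. The two routes are harmlessly interchangeable here only because the two candidate values of $\sigma^\star(\alpha)$, namely $\frac{M}{-\omega}(1-\omega)^k$ and $\frac{\overline{M}}{1+\omega}(1-\omega)^k$, agree in norm, which is all norm-perfection requires --- but since well-definedness of $\sigma^\star$ via shaded-region associates is the very issue the paper raises to motivate $k \equiv 11 \pmod{12}$, your version is the more rigorous one. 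You also make explicit two points the paper leaves implicit: that $\alpha$ is even (since $k - 1 \geq 1$), and that $M$ and $2+\omega$ are non-associate primes (via oddness of $M$), so the canonical factorization has the two-factor shape your application of the definition of $\sigma^\star$ assumes.
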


\begin{proof}
To begin, we will simplify the expression $N[\sigma^\star(\alpha)]$ by using the multiplicative property of $\sigma^\star$:

\bigskip
\begin{align*}
N \left[ \sigma^\star(\alpha) \right] & = N \left[ \sigma^\star \left( \left( (1-\omega)^k -1 \right) \cdot (1-\omega)^{k-1} \right) \right]\\
& = N \left[ \sigma^\star \left( (1-\omega)^{k-1} \right) \cdot \sigma^\star \left( (1-\omega)^k -1 \right) \right]
\end{align*}
\bigskip

Next, we exploit the fact that the inputs to $\sigma^\star$ are a prime power and an Eisenstein Mersenne prime, respectively, together with the multiplicativity of the norm function.

\bigskip
\begin{align*}
N \left[ \sigma^\star \left( (1-\omega)^{k-1} \right) \cdot \sigma^\star \left( (1-\omega)^k -1 \right) \right] & = N \left[ \frac{(1-\omega)^k -1}{1-\omega -1} \cdot \left[ (1-\omega)^k -1 +1 \right] \right] \\
& = N \left[ \frac{(1-\omega)^k -1}{1-\omega -1} \right] \cdot N \left[ (1-\omega)^k -1 +1 \right]\\
& = N \left[ \frac{-1}{\omega} \cdot \left[ (1-\omega)^k -1 \right] \right] \cdot N \left[ (1-\omega)^k \right]\\
& = N \left[ \frac{-1}{\omega} \right] \cdot N \left[ (1-\omega)^k -1 \right] \cdot N \left[ (1-\omega)^k \right]
\end{align*}
\bigskip

Our desired result now easily follows from the fact that $- \omega^2$ is a unit, together with properties of the norm function:

\begin{align*}
N \left[ \frac{-1}{\omega} \right] \cdot N \left[ (1-\omega)^k -1 \right] \cdot N \left[ (1-\omega)^k \right] & = N\left [-\omega ^2 \right] \cdot N \left[ [(1-\omega)^k -1] \cdot (1-\omega)^k \right]\\
& = N \left[ \left( (1-\omega)^k -1 \right) \cdot (1-\omega)^{k-1} (1 - \omega) \right]\\
& = N \left[ (1-\omega) \right] \cdot N \left[ \left( (1-\omega)^k -1 \right) \cdot (1-\omega)^{k-1} \right]\\
& = N \left[ 1-\omega \right] \cdot N \left[ \alpha \right]\\
%& = \left( 1^2 -(1)(-1)+(-1)^2 \right) \cdot N \left[ \alpha \right]\\
& = 3 \cdot N \left[ \alpha \right].
\end{align*}

\end{proof}

%section5
\section{Future work}\label{sec:work}

We have been unable to show that every even norm-perfect Eisenstein integer is of the form given in Theorem 4.3, but we suspect that this is indeed the case, in part because an extensive computer search using Mathematica has to date failed to find any other types of even norm-perfect Eisenstein integers.

Similarly, we have failed to find even a single odd norm-perfect Eisenstein integer. But if such an object exists, we believe it satisfies the following.

\begin{conjecture}
Any odd norm--perfect Eisenstein integer $\alpha$ must be of the form $\alpha = \pi^k \gamma^3$, where $\pi$ and $\gamma$ are both odd Eisenstein integers, $\pi$ is an odd prime, $k\equiv 2 \pmod{3}$, and $\pi$ and $\gamma$ share no common non-unit factors.
\end{conjecture}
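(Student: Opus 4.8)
The plan is to mimic Euler's classical proof that every odd perfect number has the shape $p^k q^2$, transported across the dictionary in which the prime $2$ is replaced by $1-\omega$ and ``divisible by $2$'' is replaced by ``even'' in the Eisenstein sense. First I would fix an odd norm--perfect $\alpha$ and write it via unique factorization as $\alpha = \epsilon \prod_{q=1}^{s} \pi_q^{k_q}$ in the normalized form of Definition 3.5. The central observation is that the defining equation $N[\sigma^\star(\alpha)] = 3\,N[\alpha]$ is really a statement about divisibility by $1-\omega$. Indeed, for any nonzero $\beta = (1-\omega)^m \mu$ with $\mu$ odd, multiplicativity of the norm together with $N(1-\omega)=3$ gives $N(\beta) = 3^m N(\mu)$, and since an Eisenstein integer is odd exactly when its norm is $\equiv 1 \pmod 3$, we have $3 \nmid N(\mu)$. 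Hence the exact power of $3$ dividing $N(\beta)$ equals the exact power of $1-\omega$ dividing $\beta$; call this exponent $v_{1-\omega}(\beta)$. Because $\alpha$ is odd, $3 \nmid N[\alpha]$, so the norm--perfect condition is equivalent to the clean statement $v_{1-\omega}(\sigma^\star(\alpha)) = 1$.

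Next I would localize this single global constraint across the prime factors. Since $\sigma^\star$ is multiplicative, $\sigma^\star(\alpha) = \prod_q \sigma^\star(\pi_q^{k_q})$, and therefore $\sum_q v_{1-\omega}(\sigma^\star(\pi_q^{k_q})) = 1$. As every summand is a nonnegative integer, exactly one factor --- say $\sigma^\star(\pi^k)$, writing $\pi := \pi_1$ and $k := k_1$ --- satisfies $v_{1-\omega}(\sigma^\star(\pi^k)) = 1$, while every other $\sigma^\star(\pi_q^{k_q})$ is odd. This is the precise analogue of the step in Euler's theorem isolating the single ``special'' prime, and it already forces a distinguished prime $\pi$ together with a cube-free companion that I want to identify as $\gamma^3$; the required coprimality of $\pi$ and $\gamma$ is then immediate from unique factorization.

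To extract the exponents I would compute in the residue field $\mathbb{Z}[\omega]/(1-\omega) \cong \mathbb{F}_3$. Each odd prime $\pi_q$ reduces to $\pm 1$ there, and $\sigma^\star(\pi_q^{m}) = 1 + \pi_q + \cdots + \pi_q^{m}$ reduces to $m+1$ when $\pi_q \equiv 1$, and to an alternating sum when $\pi_q \equiv -1$. Requiring the non-special factors to be odd then pins down each $k_q$ modulo $3$, and I would sharpen the computation by passing to $\mathbb{Z}[\omega]/(1-\omega)^2$ --- the exact counterpart of Euler's refinement modulo $4$ --- aiming to show that every non-special exponent is divisible by $3$ (so that the non-special part is a genuine cube $\gamma^3$) and that the special prime satisfies $\pi \equiv 1 \pmod{1-\omega}$ with $k \equiv 2 \pmod 3$.

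The hard part will be exactly this last step, and it is what keeps the statement a conjecture. The first--order computation in $\mathbb{F}_3$ only yields $k_q \equiv 0$ or $1 \pmod 3$ when $\pi_q \equiv 1$, and $k_q$ even when $\pi_q \equiv -1$; neither conclusion forces $k_q \equiv 0 \pmod 3$, which is precisely what the cube structure demands. Worse, there are complex primes $\pi \equiv -1 \pmod{1-\omega}$ of small norm --- for instance the canonical associate $4 + \omega$ of a prime above $13$, where $1+\pi = 5+\omega$ has $N(5+\omega)=21$ and hence $v_{1-\omega}(1+\pi)=1$ --- for which $\sigma^\star(\pi)$ already has $(1-\omega)$--valuation exactly $1$ with exponent $k=1 \not\equiv 2 \pmod 3$. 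Excluding such candidates as the special factor, and thereby forcing $k \equiv 2 \pmod 3$ and the cube, appears to require a genuinely global constraint rather than any finite congruence computation. I therefore expect that a complete proof would have to be coupled with, or supplanted by, a nonexistence argument showing that no odd norm--perfect Eisenstein integer exists at all.
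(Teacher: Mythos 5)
You were asked to compare your attempt against the paper's proof, but there is nothing to compare to: the statement is Conjecture 5.1, placed in the Future Work section after the authors admit they have not found even one odd norm--perfect Eisenstein integer, and the paper contains no argument for it whatsoever. Your proposal correctly recognizes this and, appropriately, does not claim a proof. The partial steps you do assert are sound. Since $3 = -\omega^2(1-\omega)^2$ and odd elements have norm $\equiv 1 \pmod{3}$ (Corollary 2.16), the $3$-adic valuation of $N(\beta)$ equals the $(1-\omega)$-adic valuation of $\beta$, so for odd $\alpha$ the norm--perfect equation is indeed equivalent to $v_{1-\omega}(\sigma^\star(\alpha)) = 1$; multiplicativity of $\sigma^\star$ across the normalized prime factorization then isolates exactly one special factor with valuation $1$, all others being odd. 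Your residue computations in $\mathbb{Z}[\omega]/(1-\omega) \cong \mathbb{F}_3$ check out, including the instructive example: $4+\omega$ is the normalized prime above $13$, it reduces to $4+1 \equiv -1 \pmod{3}$, and $N(\sigma^\star(4+\omega)) = N(5+\omega) = 21$, giving $(1-\omega)$-valuation exactly $1$ with exponent $k = 1 \not\equiv 2 \pmod{3}$. That example is genuinely valuable: it shows the conjectured form cannot follow from the Euler-style local analysis alone, and it pinpoints where the analogy breaks --- in $\mathbb{Z}$ every odd prime is $\equiv 1 \pmod{2}$, so $\sigma(p^m) \equiv m+1 \pmod{2}$ forces all non-special exponents even, whereas $\mathbb{F}_3$ has two unit residues (and $\mathbb{Z}[\omega]/(1-\omega)^2$ has six), so odd primes $\equiv -1$ survive with even exponents and primes $\equiv 1$ survive with $k_q \equiv 1 \pmod{3}$, neither of which yields the cube $\gamma^3$. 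It is also worth noting, as your sketch implicitly shows, that in the ``good'' case $\pi \equiv 1 \pmod{1-\omega}$ the first-order computation gives $\sigma^\star(\pi^k) \equiv k+1 \pmod{1-\omega}$ and hence exactly the congruence $k \equiv 2 \pmod{3}$ of the conjecture --- so the conjectured shape is precisely the pattern the local analysis suggests, and its content is the claim that the bad cases your example exhibits cannot occur in a genuine odd norm--perfect number.

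One caution on your closing sentence: the fact that finite congruence computations do not settle the exponents does not mean a structure proof ``would have to be supplanted by a nonexistence argument.'' Euler's theorem in $\mathbb{Z}$ is itself proved without knowing whether odd perfect numbers exist, by combining the local valuation bookkeeping with the global size constraint $\sigma(n) = 2n$ (abundancy considerations), and the analogous global constraint $N[\sigma^\star(\alpha)] = 3N[\alpha]$ is available here and unused in your sketch; whether it suffices is exactly the open question, but your conclusion should be ``the local method is insufficient,'' not ``only nonexistence can close the gap.'' With that adjustment, your assessment of the statement's status matches the paper's: it is a conjecture, supported there only by an unsuccessful computer search, and your analysis usefully explains \emph{why} the obvious transported proof of Euler's odd-perfect structure theorem stalls.
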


\bibliographystyle{amsplain}
\bibliography{Eisenstein}

\end{document}